\definecolor{refkey}{gray}{.75}
\definecolor{labelkey}{gray}{.5}
\colorlet{DarkGreen}{green!50!black}
\colorlet{DarkGray}{gray!60!black}
\numberwithin{equation}{section}
\renewcommand{\epsilon}{\varepsilon}
 \definecolor{refkey}{gray}{.5}
 \definecolor{labelkey}{gray}{.5}
\definecolor{light}{gray}{.9}
\newtheorem{theorem}{Theorem}[section]
\newtheorem*{theorem*}{Theorem}
\newtheorem{lemma}[theorem]{Lemma}
\crefname{claim}{Claim}{Claims}
\newtheorem{prop}[theorem]{Proposition}
\newtheorem{observation}[theorem]{Observation}
\newtheorem{corollary}[theorem]{Corollary}
\theoremstyle{definition}{

\newtheorem{definition}[theorem]{Definition}

\newtheorem*{definition*}{Definition}

\newtheorem{remark}[theorem]{Remark}
\newtheorem*{remark*}{Remark}

}
\newcommand{\N}{\mathbb N}
\renewcommand{\P}{\mathbb P}
\newcommand{\Z}{\mathbb Z}
\newcommand{\cB}{\ensuremath{\mathcal B}}
\newcommand{\cC}{\ensuremath{\mathcal C}}
\newcommand{\cE}{\ensuremath{\mathcal E}}
\newcommand{\cT}{\ensuremath{\mathcal T}}
\newcommand{\cZ}{\ensuremath{\mathcal Z}}
\newcommand{\Qtwo}{\mathsf{Q}_{\geq 2}}
 \renewcommand{\epsilon}{\varepsilon}
\DeclareMathOperator{\dist}{dist}
\newcommand{\Int}{{\mathsf{Int}}}
\newcommand{\wP}{\ensuremath{\widetilde \P}}
\crefname{step}{Step}{Steps}
\crefname{case}{Case}{Cases}
\newcommand{\superimpose}[2]{%
  {\ooalign{$#1\@firstoftwo#2$\cr\hfil$#1\@secondoftwo#2$\hfil\cr}}}
\newcommand{\sbullet}{%
  \hbox{\fontfamily{lmr}\fontsize{.4\dimexpr(\f@size pt)}{0}\selectfont\textbullet}}
\begin{document}

\title{Critical wetting in the (2+1)D solid-on-solid model}

\author{Joseph Chen}
\address{J.\ Chen\hfill\break
Courant Institute\\ New York University\\
251 Mercer Street\\ New York, NY 10012, USA.}
\email{jlc871@courant.nyu.edu}

\author{Reza Gheissari}
\address{R.\ Gheissari\hfill\break
Department of Mathematics \\ Northwestern University }
\email{gheissari@northwestern.edu}

\author{Eyal Lubetzky}
\address{E.\ Lubetzky\hfill\break
Courant Institute\\ New York University\\
251 Mercer Street\\ New York, NY 10012, USA.}
\email{eyal@courant.nyu.edu}

\begin{abstract}
\vspace{-0.2cm}
In this note, we study the low temperature $(2+1)$D SOS interface above a hard floor with critical pinning potential $\lambda_w= \log (\frac{1}{1-e^{-4\beta}})$. At $\lambda<\lambda_w$ entropic repulsion causes the surface to delocalize and be rigid at height $\frac1{4\beta}\log n+O(1)$; at $\lambda>\lambda_w$ it is localized at some $O(1)$ height. 
We show that at $\lambda=\lambda_w$, there is delocalization, with rigidity now at height $\lfloor \frac1{6\beta}\log n+\frac13\rfloor$, confirming a conjecture of Lacoin.
\end{abstract}

\maketitle
\vspace{-0.9cm}
\section{Introduction}
The Solid-On-Solid (SOS) model above a wall with an attractive pinning force of $\lambda>0$ along the wall, is the following probability distribution over nonnegative height functions $\phi$ over $\Lambda_n =  \{- \lfloor \frac{n}{2}\rfloor ,\ldots,\lfloor \frac{n}{2}\rfloor\}^2$: 
\begin{align}\label{eq:SOS-measure}
    \P(\phi) \propto \exp\Big( - \beta \sum_{x\sim y} |\phi_x - \phi_y|  + \lambda \sum_{x} \mathbf{1}_{\phi_x = 0}\Big)\,, \qquad \phi:\Lambda_n \to \Z_{\ge 0}\,.
\end{align}
A standard setting for the model sets $0$ boundary condition on $\Z^2 \setminus \Lambda_n$, which is incorporated into~\eqref{eq:SOS-measure} by viewing $\phi$ as extended onto $\Lambda_n$ together with its outer vertex boundary $\partial \Lambda_n$, but forced to be $0$ on $\partial \Lambda_n$. 

When $\lambda =0$ and $\beta$ is large (low temperatures), there is a competition between rigidity of the interface, and entropic repulsion from the constraint that $\phi_v \ge 0$ for all $v$. Bricmont, El Mellouki and Fr\"ohlich~\cite{BEF86} showed that the typical height of the interface is of order $\log n$, and the works~\cite{CLMST14,CLMST16} studied the typical height and the shapes of the level curves in detail. In particular, it was shown that the interface rises along the boundary, and is typically rigid about height $\lfloor\frac{1}{4\beta} \log n\rfloor$ (or possibly the preceding integer for certain~$n$).

When $\lambda>0$, the pinning potential competes with the entropic repulsion;
Chalker~\cite{Chalker} showed that for all large $\beta$, there is a critical $\lambda_w(\beta)$ separating a localized regime of $\lambda>\lambda_w$ in which the interface height at the origin is tight: $\phi_o=O_{\textsc p}(1)$ as $n$ grows (and $\phi_x$ is a certain constant $k(\lambda)$ for most $x\in\Lambda_n$), and a delocalized \emph{wetting regime} of $\lambda<\lambda_w$ where $\phi_o\to\infty$ (as does $\phi_x$ for almost all $x\in\Lambda_n$). Since that work, the two regimes have been investigated in more detail, e.g.,~\cite{Miracle-Sole-layering-and-wetting,ADM-layering-SOS}, culminating in works of Lacoin~\cite{Lacoin-SOS1,Lacoin-SOS2}. These last two works identified 
\begin{align}\label{eq:lambda-w}
    \lambda_w = - \log (1-e^{ - 4\beta})\,,
\end{align}
and showed that on the $\lambda>\lambda_w$ side, an infinite sequence of \emph{layering} transitions occur as $\lambda\downarrow \lambda_w$. Namely, there is a sequence of $\lambda_1>\lambda_2>\ldots$ exponentially converging to $\lambda_w$ such that between $\lambda_{i-1}$ and $\lambda_i$, the interface is rigid about height $i$ (independent of $n$), and the $\lambda_i$ mark discontinuous (first-order) jump points for the expected height above the origin, say. For more on this and related phenomena, see the surveys~\cite{Velenik06,IV18}.

Recently, Feldheim and Yang~\cite{FeldheimYang23} showed that with probability $1-o(1)$ (with high probability, or w.h.p.), the typical height remains $\lfloor \frac{1}{4\beta}\log n\rfloor+O(1)$, as is the case for $\lambda=0$, throughout the wetting regime of $\lambda<\lambda_w$. The behavior exactly at the critical point $\lambda=\lambda_w$  remained open. Lacoin conjectured (as stated in~\cite{FeldheimYang23}) that the interface at $\lambda=\lambda_w$ should still be delocalized, but it should be rigid about height $\frac1{6\beta}\log n$ rather than $\frac1{4\beta}\log n$.
Some evidence towards a lower bound was offered in~\cite{FeldheimYang23} (but only if one assumes an a priori bound of $O(n^{4/3})$ on the total number of zeros of the SOS function $\phi$).

In this note, we prove the above conjecture, showing that at the critical wetting point $\lambda_w$, all but an $\epsilon_\beta$ fraction of the sites have height exactly (up to a possible $-1$ for certain values of $n$) 
\begin{align}\label{eq:h-n-*}
    h_n^* = \lfloor \tfrac{1}{6\beta} \log n + \tfrac {1}{3}\rfloor\,.
\end{align}

\begin{theorem}\label{thm:main}
There exists an absolute constant $C$ such that for all $\beta$ large enough and $\lambda = \lambda_w$, w.h.p., 
    \begin{align*}
        |\{x \in \Lambda_n: \phi_x \notin \{h_n^*-1, h_n^*\}\}| \le \frac{C}{\beta} n^2\,.
    \end{align*}
\end{theorem}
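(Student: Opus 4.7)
The plan is to compare the partition functions $\cZ_n^{(\sfh)}$ of the wetted SOS measure~\eqref{eq:SOS-measure} across different bulk heights $\sfh\ge 0$, following the framework of~\cite{BEF86,CLMST14} as refined by Lacoin~\cite{Lacoin-SOS1,Lacoin-SOS2}. Here $\cZ_n^{(\sfh)}$ denotes the partition function restricted to configurations whose outermost level-line decomposition has bulk height $\sfh$ (so that $\phi\equiv\sfh$ on all but a small subset of $\Lambda_n$). The target $h_n^*$ will be identified as the integer $\sfh$ that maximizes $\cZ_n^{(\sfh)}$; Theorem~\ref{thm:main} then follows from a Peierls-type rigidity argument on the contour components of anomalous sites.

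A cluster expansion around the flat reference $\phi\equiv\sfh$ (with a boundary ``ramp'' of height $\sfh$ connecting the bulk to the $0$ boundary condition) yields
\[
\log\cZ_n^{(\sfh)} \;=\; n^2 f_\beta^\infty \;-\; n^2\eta_\beta(\sfh) \;-\; 4\beta n\sfh \;+\; O(\log n),
\]
where $f_\beta^\infty$ is the unconstrained SOS free energy per site, $\eta_\beta(\sfh)$ is the bulk wall/pinning penalty per site, and $4\beta n\sfh$ is the boundary ramp cost. The single-site contribution to $\eta_\beta$ is an explicit geometric sum whose leading $e^{-4\beta\sfh}$ coefficient vanishes exactly at $\lambda=\lambda_w$, reflecting the algebraic cancellation between ``lost'' configurations below the wall and ``gained'' configurations from the pinning bonus which motivates~\eqref{eq:lambda-w}. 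The leading remaining contribution then comes from proper pair clusters containing a zero-pinned site, e.g.\ $(\phi_x,\phi_y)=(0,1)$ on a nearest-neighbor pair $\{x,y\}$ (total-variation cost $6\beta\sfh-2\beta$ and weight $e^{-6\beta\sfh+2\beta+\lambda_w}$), producing
\[
\eta_\beta(\sfh) \;=\; c_\beta\,e^{-6\beta\sfh}\bigl(1+O(e^{-2\beta})\bigr)
\]
for an explicit $c_\beta=c_\beta(\beta)$ carrying an $e^{2\beta}$ factor; higher-multiplicity clusters contribute $O(e^{-8\beta\sfh})$ and are controlled by standard low-temperature isoperimetric/Peierls bounds.

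The integer $\sfh$ maximizing $\log\cZ_n^{(\sfh)}$ is then determined by the crossover $n^2 c_\beta e^{-6\beta\sfh}\simeq 4\beta n$, giving $\sfh^*=\lfloor\tfrac{1}{6\beta}\log(nc_\beta/(4\beta))\rfloor$; the $e^{2\beta}$ factor inside $c_\beta/(4\beta)$ contributes exactly $\tfrac{1}{3}$ after dividing by $6\beta$, so $\sfh^*=h_n^*=\lfloor\tfrac{1}{6\beta}\log n+\tfrac{1}{3}\rfloor$ as in~\eqref{eq:h-n-*}. The ratio $\cZ_n^{(\sfh)}/\cZ_n^{(h_n^*)}$ is exponentially small in $n$ for $\sfh\notin\{h_n^*-1,h_n^*\}$, so w.h.p.\ the bulk height lies in this pair. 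Conditional on that, a Peierls-type bound applied to connected components of $\{x:\phi_x\notin\{h_n^*-1,h_n^*\}\}$---each perimeter-$p$ component having probability $O(e^{-\beta p})$, with the dominant contribution from $O(1)$-sized clusters of density $O(e^{-c\beta})$---yields at most $Cn^2/\beta$ anomalous sites w.h.p.

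The main obstacle is the precise bulk asymptotics of $\eta_\beta(\sfh)$ at $\lambda=\lambda_w$. Beyond the single-site cancellation, one must identify and sum all pair configurations of total-variation cost $6\beta\sfh-O(\beta)$ (the ``$(0,j)$'' family and its wall/pinning-restricted variants), producing the correct $c_\beta$ with the $e^{2\beta}$ prefactor needed for the $+\tfrac{1}{3}$ in $h_n^*$. One must also verify, via a truncated cluster expansion, that higher-order multi-site clusters contribute only at $O(e^{-8\beta\sfh})$ and do not shift the integer-valued optimum, and that the Peierls bound on anomalous sites is tight enough to give the $Cn^2/\beta$ scaling claimed in the theorem.
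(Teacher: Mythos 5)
Your proposal takes a genuinely different route from the paper: you want a free-energy/cluster-expansion comparison of restricted partition functions $\cZ_n^{(\sfh)}$ across bulk heights, whereas the paper never introduces such objects. Instead, the paper works with the relaxed measure $\wP$ on $\widetilde\Omega$ (\cref{obs:relaxed-model}), in which the cancellation at $\lambda=\lambda_w$ is \emph{exact} (a singleton at height $\le 0$ takes a geometric negative depth whose sum is precisely the $\lambda_w$ token), and then runs two Peierls-type arguments: a shift-down of up-contours at level $h_n^*+1$ whose admissibility is controlled by the tooth-spike rate of \cref{lem:tooth} (upper bound), and a lift-up of the whole interface injecting tooth-like spikes at pairs $X_k(\varphi)$ (lower bound, \cref{lem:lower-bound-histogram}). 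You have correctly identified the mechanism (single-spike cancellation at $\lambda_w$, tooth-like pair spikes at rate $e^{-6\beta\sfh+2\beta}$, the $\tfrac13$ offset from the $e^{2\beta}$ prefactor), so the intuition matches the paper.

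However, there are real gaps. First, $\cZ_n^{(\sfh)}$ ``restricted to configurations whose outermost level-line decomposition has bulk height $\sfh$'' is not a clean decomposition of the configuration space, and even granting a definition, making a convergent cluster expansion rigorous above a hard wall at the \emph{critical} pinning point is delicate precisely because the leading polymer contribution is a cancellation of two terms of the same order; you flag this yourself as ``the main obstacle'' and do not resolve it. The paper circumvents it entirely via the algebraic identity behind $\widetilde\Omega$ and never needs an expansion. Second, the final step of your argument --- bounding anomalous sites by a Peierls estimate on ``$O(1)$-sized clusters of density $O(e^{-c\beta})$'' --- does not produce the stated $Cn^2/\beta$ bound; a density $e^{-c\beta}$ gives an $e^{-c\beta}n^2$ count, not an $n^2/\beta$ count. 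In the paper, the $C/\beta$ scaling comes from the total-gradient estimate (\cref{lem:bound-wall-faces}) and the $\Qtwo$ bound (\cref{cor:FY-q2+-bound}), combined with a lift-up argument that identifies $\bigcup_{k\ge 2}X_k(\varphi)$ and the neighborhood of $A$ as the only sites that can lie outside $\{h_n^*-1,h_n^*\}$; your outline omits this and would need a separate argument to account for the contribution of sites adjacent to nonzero gradients and to $\Qtwo$.
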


\begin{remark}
For ``most'' values of $n$ (e.g., a subset of $\N$ with a natural density of at least $0.6$), we can identify the single height $h\in \{h_n^*-1,h_n^*\}$ such that, w.h.p., all but an $\epsilon_\beta$-fraction of the sites  have height $h$.
For instance, if the fractional part of $\frac1{6\beta}\log n+\frac13$ falls in the interval $[0.34,1)$, then $h=h_n^*$ (see \cref{rem:lower-bound-k=1}), and if it falls in the interval $[0,\frac{\log \beta}{8\beta})$ then $h=h_n^*-1$ (see \cref{rem:upper-bound-k=1}).
\end{remark}

\begin{remark}\label{rem:shape}
   Lemma~\ref{lem:upper-bnd} in fact shows that all connected sets of vertices of height at least $h_n^*+1$ have exponential tails on their sizes, and by a union bound the largest one is $O(\log n)$ with high probability. One would expect, in analogy with the $\lambda=0$ case~\cite{CLMST16}, that there is a unique macroscopic connected component of the $\{x: \phi_x \in \{h_n^*-1,h_n^*\}\}$, occupying all but some $\epsilon_\beta$ fraction of $\Lambda_n$, whose scaling limit is a Wulff shape.
\end{remark}

\begin{figure}
    \centering
    \includegraphics[height=0.25\textwidth]{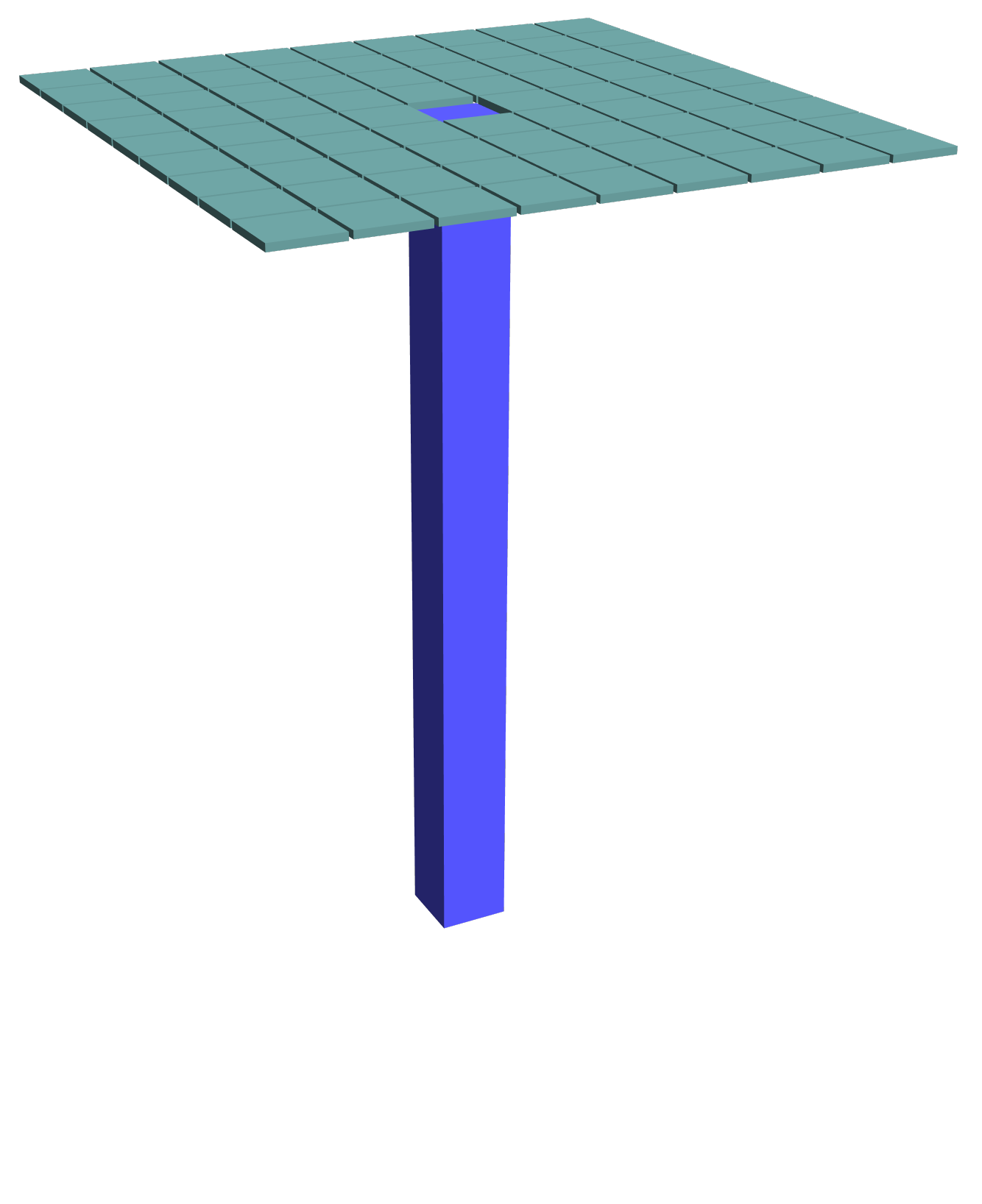}
    \hspace{20pt}
    \includegraphics[height=0.25\textwidth]{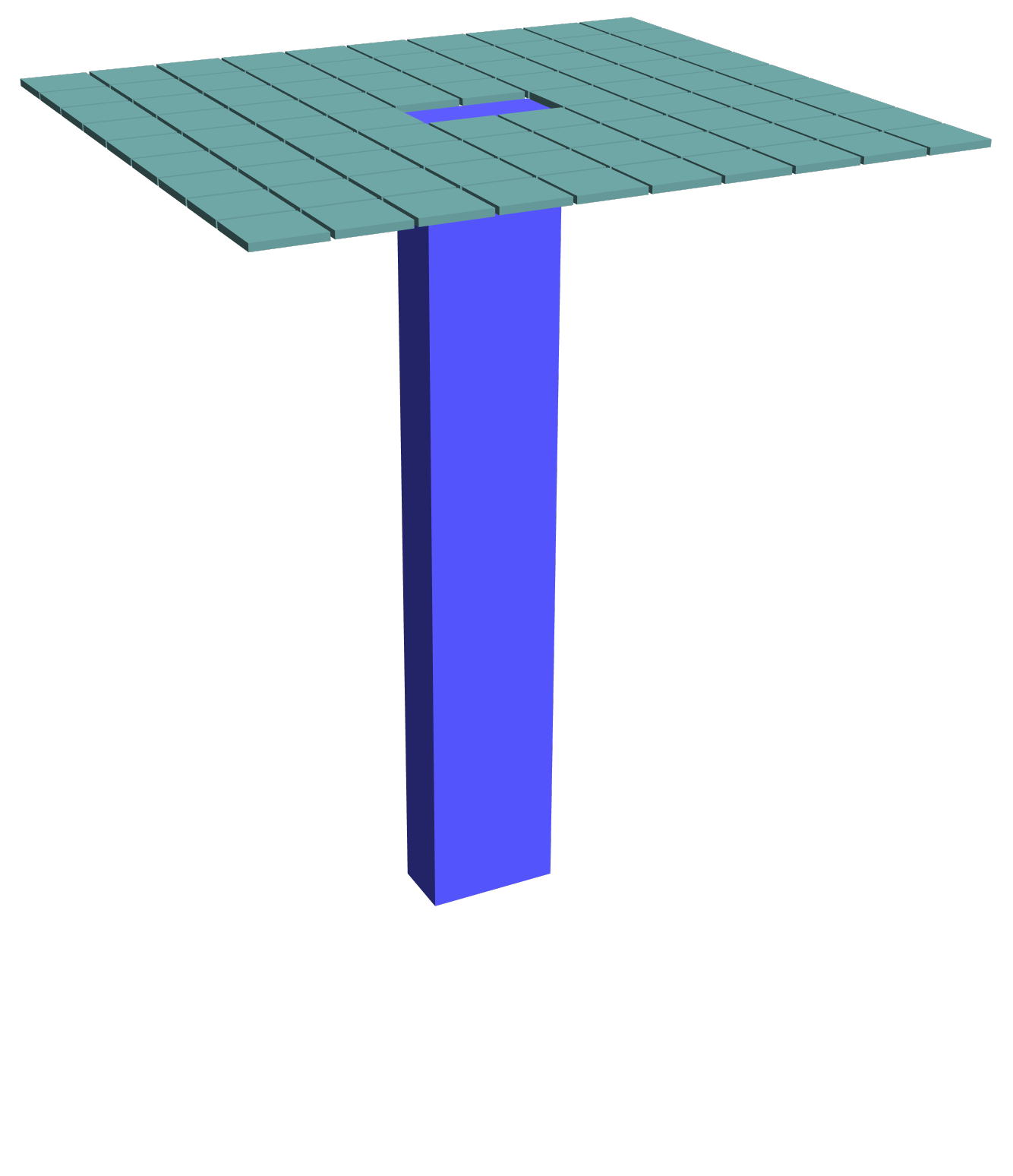}
    \hspace{20pt}
    \includegraphics[height=0.25\textwidth]{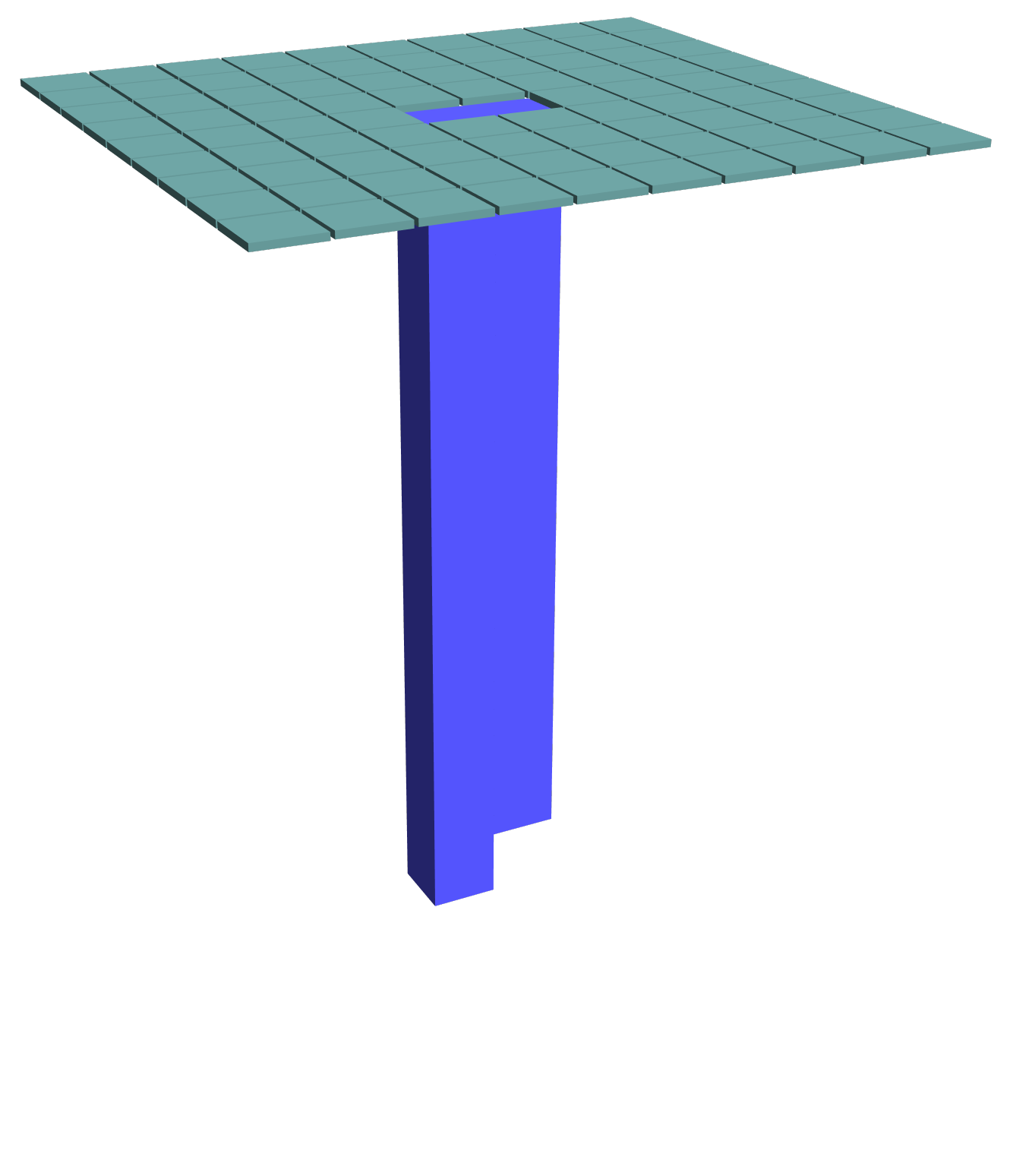}
    \vspace{-0.25in}
    \caption{Downward large deviations of the SOS surface. Left: a $1\times1\times h$ spike of depth $h$ with rate $e^{-4\beta h}$, governing the no-pinning setting ($\lambda=0$). Middle: a $1 \times 2\times h$ spike with rate $e^{-6\beta h}$. Right: a toothlike-spike with rate $e^{-6\beta h+2\beta}$ that governs the critical pinning setting ($\lambda=\lambda_w$).}
    \label{fig:sos-pin-ld}
    \vspace{-0.2in}
\end{figure}
To understand where this height is coming from, let us recap the intuition given in \cite{BEF86} for the case $\lambda=0$. Making the ansatz that at $\beta$ large, the interface is rigid about a certain height $h(n)$, there is a tradeoff between $\exp ( - 4\beta n)$ 
for lifting the bulk of the interface up from height $h-1$ to $h$, and $(1+e^{ - 4\beta h})^{n^2}$ for the entropy gained by the newfound allowance of the interface to have downward oscillations of height $h$ (the easiest way being a straight $1\times 1\times h$ ``spike'') and still remain nonnegative. These two terms are balanced at $\frac{1}{4\beta}  \log n+O(1)$, where rigidity was established in~\cite{CLMST14} (and then refined to $\lfloor \frac1{4\beta}\log n\rfloor-1,\lfloor \frac1{4\beta}\log n\rfloor$ in \cite{CLMST16}).
The work of~\cite{FeldheimYang23} showed that this tradeoff is still governing the behavior as long as $\lambda<\lambda_w$. 

At the critical $\lambda= \lambda_w$, as conveyed in~\cite[Section 1.6]{FeldheimYang23}, one expects there to be a perfect cancellation of entropic repulsion via singleton ($1\times 1 \times h$) downward spikes with the pinning potential, leading the only gain in entropy from lifting the interface to be from the lower order $2\times 1 \times h$ downward spikes. This changes the above competition to be between $\exp(-4\beta n)$ and $(1+e^{ - 6\beta h})^{n^2}$, which is balanced at $\frac{1}{6\beta} \log n+O(1)$. 
The true threshold we identify of~\eqref{eq:h-n-*} is actually governed by a minor refinement of the above expectation; namely, we show that the relevant entropic repulsion effect is by \emph{toothlike-spikes} that consist of a $2\times 1\times (h-1)$ spike with a last block appended below one of its two bottom vertices: see \cref{fig:sos-pin-ld}. This is what causes the $\frac{1}{3}$ correction to $\lfloor\frac{1}{6\beta}\log n\rfloor$ in~\eqref{eq:h-n-*}. For proving the sharp result of Theorem~\ref{thm:main}, it is important to have identified this mechanism in both the lower bound and upper bound on the typical interface height.

\medskip
    We conclude this section with a few necessary preliminaries on the model. 
Define the state space $\widetilde \Omega$ where the floor constraint is relaxed as 
\[\widetilde \Omega = \{\varphi:\Lambda_n \to \Z\,:\, \nexists  x\sim y\, \text{ with } \varphi_x\le -1 \text{ and } \varphi_y\le 0\}\,.\]
Roughly, that is saying that all oscillations where negative heights are attained have singleton intersections with height zero. Notice that $\widetilde \Omega$ is an increasing subset of the set of all $\varphi: \Lambda_n \to \Z$. 

We let $\Qtwo(\varphi)$ be the sites taking height zero that also have a neighbor taking height zero: 
\begin{align}\label{eq:def-q2}
    \Qtwo(\varphi) = \{x: \varphi_x \le 0 \text{ and } \exists y\sim x \text{ with }\varphi_y \le 0\}\,.
\end{align}
Observe that in $\widetilde \Omega$, vertices of $\Qtwo$ cannot take negative heights, and therefore must take height zero. 

Then we can define an SOS model $\wP$ on $\widetilde \Omega$, which when the positive part is taken, gives exactly the SOS model with pinning potential $\lambda$ of~\eqref{eq:SOS-measure} as follows. 
\begin{align}\label{eq:alternate-measure}
    \wP(\varphi) \propto  \exp\Big( - \beta \sum_{x\sim y} |\varphi_x - \varphi_y| + \lambda |\Qtwo(\varphi)|\Big)\,.
\end{align}
The following observation of~\cite{FeldheimYang23} relating $\max\{0,\varphi\}$ for $\varphi\sim \wP$ to $\phi \sim \P$ will be very useful. 
\begin{observation}\label{obs:relaxed-model}
    Fix any $\beta>0$ and let $\lambda = \lambda_w$. Then for every $\phi: \Lambda_n \to\Z_{\ge 0}$, 
    \begin{align*}
        \P(\phi) = \sum_{\varphi: \max\{0,\varphi\} = \phi} \wP(\varphi)\,,
    \end{align*}
    where the maximum is taken pointwise. 
\end{observation}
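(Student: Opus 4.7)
The plan is to show that the unnormalized $\wP$-weights on the fiber $\{\varphi \in \widetilde\Omega : \max\{0,\varphi\} = \phi\}$ sum to the unnormalized $\P$-weight of $\phi$ up to a $\phi$-independent constant; since the fibers partition $\widetilde\Omega$ and both measures are probabilities, this identifies the partition functions and yields the stated equality.

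Fix $\phi : \Lambda_n \to \Z_{\ge 0}$ and partition its zero set $\{x : \phi_x = 0\}$ into $A = \Qtwo(\phi)$ (zeros with a zero-neighbor) and $B$ (isolated zeros of $\phi$). I claim the fiber consists exactly of those $\varphi$ with $\varphi_x = \phi_x$ for $x \notin B$ and $\varphi_x \in \{0,-1,-2,\ldots\}$ for $x \in B$, with the choices on $B$ independent. Indeed, from $\max\{0,\varphi_x\} = \phi_x$ one already has $\varphi_x = \phi_x$ wherever $\phi_x \ge 1$ and $\varphi_x \le 0$ wherever $\phi_x = 0$; the $\widetilde\Omega$ constraint then forbids $\varphi_x \le -1$ whenever $x$ has a neighbor with $\varphi_y \le 0$, which rules out negative values on $A$ (where both $x$ and a neighboring $y$ are zeros). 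On $B$, every neighbor already has $\phi_y \ge 1$, so no value of $\varphi_x \le 0$ creates a forbidden edge; and distinct type-$B$ vertices are non-adjacent (else neither would be isolated), so the choices factorize.

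Next I would evaluate the weights. For $x \in B$, setting $\varphi_x = -k$ increases each of its four incident edge energies by exactly $k$ (since $|\varphi_y - \varphi_x| = \phi_y + k$ with $\phi_y \ge 1$) and affects no other edge. A short case check shows $\Qtwo(\varphi) = A$ uniformly on the fiber: vertices of $A$ lie in $\Qtwo(\varphi)$, while every $x \in B$ has all neighbors strictly positive in $\varphi$ and is excluded regardless of $\varphi_x$. Summing independently over $(k_x)_{x\in B}$ then gives a factor of $\sum_{k\ge 0} e^{-4\beta k} = (1-e^{-4\beta})^{-1}$ per type-$B$ vertex, so the fiber sum equals
\begin{align*}
    \exp\Big( -\beta \sum_{x\sim y}|\phi_x-\phi_y| + \lambda_w |A| \Big) \cdot (1-e^{-4\beta})^{-|B|}.
\end{align*}

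The critical identity $(1-e^{-4\beta})^{-1} = e^{\lambda_w}$ turns the second factor into $e^{\lambda_w |B|}$, which combines with $e^{\lambda_w |A|}$ to reproduce $e^{\lambda_w \sum_x \mathbf{1}_{\phi_x=0}}$, i.e.\ exactly the pinning term of $\P(\phi)$. Hence the fiber sum equals the unnormalized $\P$-weight of $\phi$ up to the ratio of partition functions, and summing over $\phi$ pins this ratio to $1$. There is no genuine obstacle in this argument — the entire observation is the design rationale behind the definitions of $\widetilde\Omega$, $\Qtwo$, and the specific critical value $\lambda_w$ — and the only care needed is in verifying the two structural claims above: that the fiber factorizes over $B$, and that $\Qtwo(\varphi)$ depends only on $\phi$ within the fiber.
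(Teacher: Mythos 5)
Your proof is correct and makes rigorous exactly the intuition the paper attaches to this observation (which it states by citing \cite{FeldheimYang23} without proof): a singleton at height $\le 0$ in $\varphi$ at depth $k$ costs $e^{-4\beta k}$, summing to $(1-e^{-4\beta})^{-1}=e^{\lambda_w}$, precisely the pinning token a zero of $\phi$ collects. The two structural checks you flag --- that distinct isolated zeros are non-adjacent so the geometric choices factorize, and that $\Qtwo(\varphi)=\Qtwo(\phi)$ is constant on the fiber --- are indeed the right subtleties and are both verified correctly.
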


Lemma 3.1 of ~\cite{Lacoin-SOS1} also used a relaxation of the floor constraint, but that one was more complicated and for general $\lambda$. The case $\lambda = \lambda_w$  with the state space $\widetilde \Omega$ as suggested in~\cite[Section~4]{FeldheimYang23} is especially clean due to the equivalence between the ability in $\varphi$ for a singleton at height $\le 0$ to take a geometric random variable with mean $e^{ - 4\beta}$ as a negative height, and the token of size $\frac{1}{1-e^{-4\beta}}$ that $\phi$ collects by being at height zero. 

Several of our proofs will go by identifying events for $\varphi$ in terms of events in $\wP$ to which they correspond, and then bounding those events under $\wP$.
 
We conclude below with the definition of up-contours and down-contours, applicable for both $\phi$ and $\varphi$, and refer the reader to \cite[Sec.~3]{CLMST14} for a full description of the contour representation of the SOS model. 

\begin{definition}
   For each height $h$, begin by letting $\Gamma_h(\varphi)$ denote the set of all dual-edges that separate $v\sim w$ for $\varphi_v \ge h$ and $\varphi_w \le h-1$. In order to uniquely decompose this into a collection of loops, it is important to have some canonical splitting rule when four dual-edges are incident a dual-vertex. The standard procedure is to round these corners according to a $\textrm{SE}$-$\textrm{NW}$ rule, whence we are left with a collection of disjoint contours $\gamma_1,\ldots,\gamma_m$ with interiors $\Int(\gamma_1),\ldots,\Int(\gamma_m)$ such that $\varphi_v\ge h$ on the interior vertex boundary of $\Int(\gamma_i)$ and $\varphi_v \le h-1$ on the exterior vertex boundary of $S_i$ for all $i$. Then $\gamma_1,\ldots,\gamma_m$ are the up $h$-contours of $\varphi$. 

   Analogously, one uniquely defines the down $h$-contours of $\varphi$ with each one having $\varphi_v \le h$ on its internal vertex boundary, and $\varphi \ge h+1$ on its external vertex boundary.   
\end{definition} 

In what follows, we call a set $S\subset \Lambda_n$ simply-connected if it can be the interior of a contour with the above $\textrm{SE}$-$\textrm{NW}$ splitting rule. For such a set, we use $\partial S$ for its bounding contour. We let $\P_S^h$ (respectively, $\wP^h_S$) denote the SOS measure~\eqref{eq:SOS-measure} (resp.~\eqref{eq:alternate-measure}) on the region $S$ with boundary conditions $h$ on $S^c$.

\subsection*{Acknowledgements}
The authors thank the anonymous referee for helpful comments. 
The research of R.G. is supported in part by NSF DMS-2246780. 
The research of E.L.\ was supported by the NSF grant DMS-2054833.

\section{Upper bound}\label{sec:UB}

In this section we show the following upper bound on the typical height of $\varphi$. 

\begin{prop}
    \label{prop:main-upper-bound}
  There exists $C>0$ such that for all $\beta$ large, we have 
\[ \wP\left( \#\{x: \varphi_x \geq h_n^* + 1\} \geq C e^{- \beta} n^2\right) \leq e^{ - n^{3/4}}\,. 
\]
\end{prop}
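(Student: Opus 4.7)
My plan is a Peierls-type argument for the up-$(h_n^*+1)$-contours of $\varphi$ under $\wP$, combined with a coarse-graining step to lift the expectation bound to the claimed exponential tail. Any $x$ with $\varphi_x \ge h_n^*+1$ lies in the interior of some up-$(h_n^*+1)$-contour, so it suffices to bound the total enclosed area $X := \sum_\gamma |\Int(\gamma)|$.

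For the single-contour bound, fix a simply-connected $S \subset \Lambda_n$ with $\partial S = \gamma$ of perimeter $\ell$, and consider the contour flip $\Phi_S \colon \varphi \mapsto \varphi - \one_S$. Since $h_n^* \ge 1$ for $n$ large and interior heights on $S$ are $\ge h_n^*+1 \ge 2$, $\Phi_S$ injects $\widetilde\Omega$ into itself and does not alter $|\Qtwo|$ (which depends only on heights $\le 0$). The only Hamiltonian terms that change are the $\ell$ boundary-edge gradients, each decreasing by exactly $1$; hence $\wP(\Phi_S(\varphi))/\wP(\varphi) = e^{\beta\ell}$, and since $\Phi_S$ is an injection, summing $\wP(\varphi)=e^{-\beta\ell}\wP(\Phi_S(\varphi))$ over $\varphi$ with $\gamma$ an up-$(h_n^*+1)$-contour gives $\wP(\gamma\text{ is an up-}(h_n^*+1)\text{-contour}) \le e^{-\beta\ell}$. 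The choice \eqref{eq:h-n-*} enters only when refining this via the ratio of interior partition functions at levels $h_n^*$ and $h_n^*+1$: a cluster expansion of downward oscillations around a flat reference, truncated beyond the $1\times 1\times h$ spikes (perfectly cancelled by $\lambda_w$) down to the toothlike spikes of \cref{fig:sos-pin-ld} with rate $e^{-6\beta h + 2\beta}$, contributes at most $\exp(O(|S|/n))$ because \eqref{eq:h-n-*} guarantees $n \cdot e^{-6\beta h_n^* + 2\beta} = O(1)$. This correction is easily absorbed into $e^{-\beta\ell}$ for $\ell \ll n^{1/2}$.

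Combining with the Peierls contour count ($\le C^\ell$ contours of length $\ell$ through a given edge) and the isoperimetric bound $|\Int(\gamma)| \le \ell^2/16$ gives $\E X \le n^2 \sum_{\ell \ge 4} \ell^2 C^\ell e^{-\beta\ell} = O(e^{-4\beta} n^2)$, well below the target $Ce^{-\beta} n^2$. For the tail: by Peierls, the probability that any up-$(h_n^*+1)$-contour has length $\ge n^{3/4}$ is $\le n^2 C^{n^{3/4}} e^{-\beta n^{3/4}} \le e^{-n^{3/4}}$ for $\beta$ large. On the complementary event all contours are shorter than $n^{3/4}$, so partitioning $\Lambda_n$ into sub-boxes of side $\gg n^{3/4}$ and exploiting the spatial Markov property of $\wP$ (conditioning on a thick separator of heights) makes the per-sub-box contributions to $X$ essentially independent, and a standard Chernoff bound tensorizes across the $O(n^{1/2})$ sub-boxes to yield the $e^{-n^{3/4}}$ decay, with margin to spare.

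The main technical obstacle is the refined interior entropy bound — identifying the toothlike-spike mechanism as the dominant downward-oscillation at $\lambda_w$ rather than the perfectly cancelled $1\times 1 \times h$ spikes or the coarser $2\times 1\times h$ spikes — since this is what pins down the particular threshold $h_n^*$ in \eqref{eq:h-n-*}. For the upper bound proved here only a one-sided bound on the entropy gain is needed, which is comparatively soft; it is the matching lower bound, where the entropy must be \emph{exploited} rather than suppressed, that forces one to track the $\tfrac{1}{3}$ correction to the naive threshold $\lfloor \tfrac{1}{6\beta}\log n\rfloor$ and is the more delicate half of the argument.
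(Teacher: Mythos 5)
Your overall architecture — Peierls on up-$(h_n^*+1)$-contours with a correction of size $\exp(O(|S|/n))$ from downward entropy, plus a separate argument to upgrade to an $e^{-n^{3/4}}$ tail — does match the paper's, but there are two significant gaps.

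First, the central injection claim is false. You assert that since the up-$(h_n^*+1)$-contour $\gamma = \partial S$ has internal boundary heights $\ge h_n^*+1 \ge 2$, the flip $\Phi_S\colon\varphi\mapsto\varphi-\one_S$ injects $\widetilde\Omega$ into itself and leaves $|\Qtwo|$ unchanged. But only the internal vertex boundary of $S$ is forced to be $\ge h_n^*+1$; the interior of $S$ can contain nested down-contours and sites at height $0$ or below. If some $v\sim w$ in $S$ have $\varphi_v=-1$, $\varphi_w=1$ (or $\varphi_v=0,\varphi_w=1$), then $\Phi_S\varphi\notin\widetilde\Omega$. The paper isolates exactly the obstruction: the event $\cE_S$ that no $x\sim y$ in $S$ has $\varphi_x\le 0$, $\varphi_y\le 1$ (Observation~\ref{obs:shift-down}), and the Peierls gain $e^{-\beta|\partial S|}$ holds only after restricting to $\cC^\uparrow_{S,h}\cap\cE_S$. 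Your ``ratio of interior partition functions / cluster expansion'' remark is meant to supply the factor $\exp(O(|S|/n))$ for the cost of this restriction, but you present it as a soft refinement when in fact it \emph{is} the proof of $\wP(\cE_S\mid\cC^\uparrow_{S,h})\geq\exp(-c_0 e^{-6\beta h+2\beta}|S|)$, and that in turn rests entirely on the upper bound $\wP^h_S(\varphi_x\le 0,\varphi_y\le 1)\leq(1+\epsilon_\beta)e^{-6\beta h+2\beta}$ of Lemma~\ref{lem:tooth}. That lemma is nontrivial: it itself requires the lifting map of Lemma~\ref{lem:lifting-map} to show down-contours around $\{x,y\}$ have exponential tails, so that the two-column dip dominates. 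Your closing paragraph suggests this one-sided bound is ``comparatively soft'' for the upper bound and that the real work is in the matching lower bound; it is actually the reverse. The lower bound in Section 3 only needs to \emph{exhibit} the tooth-like spikes (a direct construction in Lemma~\ref{lem:lower-bound-histogram}), whereas the upper bound must \emph{rule out} all other mechanisms for downward oscillation — that is where the contour analysis lives.

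Second, the tail argument. Ruling out a single long contour by a union bound, as you do, is fine and matches the paper. But on the complementary event you propose tiling $\Lambda_n$ into boxes of side $\gg n^{3/4}$ and tensorizing via a ``thick separator'' plus Chernoff. That scheme produces roughly $n^{1/2}$ boxes, and a Chernoff bound over $n^{1/2}$ approximately-independent contributions gives decay at rate $e^{-cn^{1/2}}$, not $e^{-n^{3/4}}$, unless you put in additional structure; also, the separator is itself random and conditioning on it requires care under $\wP$. The paper avoids all of this: Lemma~\ref{lem:small-area-in-high-contours} union-bounds directly over collections of contours organized by dyadic size class and gets $e^{-cn}$ for the contribution of small contours, so the $e^{-n^{3/4}}$ bottleneck comes solely from the long-contour union bound. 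Your expectation bound $\E X = O(e^{-4\beta}n^2)$ is correct but by itself says nothing about the tail, so it does not replace this step.
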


The argument goes by showing that up-contours $\gamma$ at height $h_n^* +1$ have exponential tails. This requires us to be able to shift down the interior of such a contour to gain a factor of $e^{ \beta |\gamma|}$ while remaining in the admissible space $\widetilde \Omega$. In turn, in order to argue that the gain from the deletion of the up-contour dominates the entropic repulsion, our goal is to establish that the probability of the interior of $\gamma$ being such that its shift down by $1$ is admissible has probability given by $(1-e^{-6\beta h + 2\beta})^{|\Int(\gamma)|}$.

\subsection{A lifting map to gain entropy}

When trying to identify the rate for downward oscillations with $e^{-6\beta h +2\beta}$, the possible pinning effect of the floor makes Peierls maps that delete down-contours more delicate than they would be when $\lambda=0$---although having a large down-contour has a cost proportionate to the length of the contour, it allows for the possibility of collecting more ``tokens" of $\lambda$ through the $\Qtwo$ term in~\eqref{eq:alternate-measure}. As the number of tokens gained can be proportionate to the area inside the contour, the benefit of the tokens may outweigh the cost of the downward contour for large contours. Hence, we need an a priori upper bound on the size of $\Qtwo$ interior to a contour. 

In what follows, let $S$ be a simply connected set, and let $S' \subseteq S$ be a subset which should be thought of as the interior of a down-contour. Let $W \subseteq S'$ be a further subset representing a set of points on which we want to keep $\varphi$ fixed (ultimately, this will be a pair of adjacent vertices or the empty set).

For any $\varphi$, let $A$ be an arbitrary subset of $\Qtwo(\varphi) \cap S' \setminus W$. For each such $A$, we want to consider the map which raises the height of $\varphi$ inside $S' \setminus (W\cup A)$ by $1$:
$$U_A\varphi = \begin{cases} (U_A\varphi)_z = \varphi_z & z \in (S')^c \cup W \cup A \\ (U_A\varphi)_z = \varphi_z+1 & \text{else}\end{cases}\,.$$
For any vertex set $V$, we can define the edge boundary $\partial_e V$ as the set of edges
\begin{align*}
    \partial_e V := \{\{u,v\}: u\sim v, u \notin V, v \in V\}\,.
\end{align*}
Moreover, for a $\varphi$, define $\Delta S'$ as the change in disagreements attributed to the edge boundary of $S'$:
\begin{equation}
    \Delta S' = \Delta S'(\varphi) := |\{u\sim v: u \in S', v \notin S', \varphi_u \geq \varphi_v\}| - |\{u\sim v : u \in S', v \notin S', \varphi_u < \varphi_v\}|\,.
\end{equation}

\begin{lemma}\label{lem:lifting-map}
    Let $S$ be a simply connected set, $h\ge 0$, $S' \subseteq S$, and $W \subseteq S' \subseteq S$. Let $\varphi$ be such that for all $v \in S$ which are also in the exterior boundary of $S'$ (with respect to $S$), we have $\varphi_v \geq 1$. Then,
    \begin{align*}
        \sum_{A\subseteq \Qtwo(\varphi) \cap S' \setminus W} \wP_S^h(U_A \varphi) \geq \wP_S^h(\varphi)e^{-\beta\Delta S' - \beta|\partial_e W| - \lambda|W|} (1+\frac{1}{2} e^{ - 6\beta})^{|\Qtwo(\varphi) \cap S'\setminus W|/5}\,.
    \end{align*}
\end{lemma}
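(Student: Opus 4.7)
Set $Q := \Qtwo(\varphi) \cap S' \setminus W$ and let $\psi := U_\emptyset\varphi$. Then $U_A\varphi$ agrees with $\psi$ outside $A$ and is $\psi$ lowered by one on $A$. The plan is first to estimate the base ratio $\wP_S^h(\psi)/\wP_S^h(\varphi)$ in terms of the three claimed factors, and then to show that summing the ratios $\wP_S^h(U_A\varphi)/\wP_S^h(\psi)$ over $A\subseteq Q$ produces the factor $(1+\tfrac12 e^{-6\beta})^{|Q|/5}$ (after absorbing the $\Qtwo$-loss from $Q$ into the sum).

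For the base ratio I would track the energy change from $\varphi$ to $\psi$ obtained by raising $S' \setminus W$ by one. Edges internal to $S' \setminus W$ are unaffected. Along $\partial_e S'$ the net disagreement change equals $\Delta S'$: the hypothesis $\varphi_v \ge 1$ on the exterior vertex boundary of $S'$ in $S$, together with $h\ge 0$ on $S^c$, ensures that all signs in the definition of $\Delta S'$ coincide with the actual per-edge changes. Across $\partial_e W$ inside $S'$ the magnitude of the change is bounded by $|\partial_e W|$. For $\Qtwo$, raising $Q$ to height $1$ costs at most $|Q|$ tokens, and $W$-sites that lose their $\Qtwo$-partner cost at most $|W|$ more; the $|W|$ loss supplies the $-\lambda|W|$ factor, while the $|Q|$ loss must be recovered by the sum over $A$.

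Next, for each $A\subseteq Q$, I would compute $\wP_S^h(U_A\varphi)/\wP_S^h(\psi)$ by lowering the sites of $A$ from height $1$ back to height $0$. The central local computation is for a tooth-pair $A=\{q,q'\}$ with $q\sim q'$ and $q,q'\in Q$: the six edges incident to $\{q,q'\}$ but going to sites outside $A$ each change disagreement by $+1$ (using that $\Qtwo$-neighbors lie at height $\ge 0$ in $\widetilde\Omega$ and so at height $\ge 1$ in $\psi$), the edge $\{q,q'\}$ is unchanged, and both $q,q'$ reenter $\Qtwo$, giving $\wP(U_{\{q,q'\}}\varphi)/\wP(\psi) \ge e^{-6\beta+2\lambda}$. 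For $A$ a disjoint union of $k$ mutually non-adjacent tooth-pairs, these contributions factorize to $e^{k(-6\beta+2\lambda)}$.

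Finally, to obtain the exponent $|Q|/5$, I would extract a matching $M$ in the subgraph of $\Z^2$ induced by $Q$ with $|M| \ge |Q|/5$. Since this graph has maximum degree at most $4$, a standard greedy argument produces such an $M$, supplemented by singleton $A = \{q\}$ moves for those $q \in Q$ whose $\Qtwo$-partner lies in $W \cup (S')^c$ (the external partner providing a $\Qtwo$-token while staying fixed). Summing over subsets of the $M$-pairs then yields $(1 + e^{-6\beta+2\lambda})^{|M|} \ge (1+\tfrac12 e^{-6\beta})^{|Q|/5}$ (using $e^{2\lambda}>1$ and $|M|\ge |Q|/5$); the single term $A = Q$, together with the tooth-pair contributions, will supply the $e^{\lambda|Q|}$ factor needed to cancel the $-\lambda|Q|$ from the base step. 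The main obstacle is producing the matching uniformly over configurations and simultaneously handling the $Q$-sites whose $\Qtwo$-partners lie in $W \cup (S')^c$, all while verifying that $U_A\varphi \in \widetilde\Omega$ for every $A$ appearing in the sum.
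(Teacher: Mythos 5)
Your overall scheme (compare $\varphi$ to $\psi := U_\emptyset\varphi$, then sum the ratios $\wP_S^h(U_A\varphi)/\wP_S^h(\psi)$ over $A\subseteq Q$) is a reasonable reorganization of the paper's single-shot estimate, and your local computations for a tooth-pair ($e^{-6\beta+2\lambda}$) and a singleton with external partner ($e^{-4\beta+\lambda}$) are both correct. The gap is in how you propose to recover the base-step loss $e^{-\lambda|Q|}$: a matching is the wrong combinatorial object.

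Concretely, a maximal matching $M$ in a max-degree-$4$ graph with no isolated vertex has $|M| \ge |Q|/5$ but only \emph{covers} $2|M|$ vertices, so up to $3|Q|/5$ vertices of $Q$ can be unmatched. These unmatched vertices need not have a $\Qtwo$-partner in $W\cup(S')^c$ (e.g.\ the middle-unmatched endpoint of a path $q_1-q_2-q_3$ all inside $Q$), so your "singleton move" is unavailable for them. Their base-step loss $e^{-\lambda}$ per vertex, with $\lambda\approx e^{-4\beta}$, is never recovered, and $e^{-\lambda\cdot\Theta(|Q|)} = e^{-\Theta(e^{-4\beta}|Q|)}$ strictly overwhelms the matching gain $(1+e^{-6\beta+2\lambda})^{|M|} = e^{\Theta(e^{-6\beta}|Q|)}$ because $e^{-4\beta}\gg e^{-6\beta}$. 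Your proposed rescue --- "the single term $A=Q$ supplies the $e^{\lambda|Q|}$" --- does not help: $A=Q$ contributes $e^{-\beta|\partial_e Q|+\lambda|Q|}$, whose boundary penalty destroys the gain, and since the $A=Q$ term and the matching-subset terms are disjoint summands, their contributions add rather than multiply, so you only get the maximum of the two, not both.

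The paper avoids this by \emph{tiling} $Q$ with connected tiles $T_i$ of size $2$--$5$ (possible since every $q\in Q$ has a $\Qtwo$-partner, so the relevant connected components have size $\ge 2$), and proves the per-tile inequality
\[
e^{-\lambda|T_i|}\sum_{A_i\subseteq T_i}e^{-\beta|\partial_e A_i|+\lambda|\Qtwo(A_i)|}\ \ge\ 1+\tfrac12 e^{-6\beta}.
\]
The essential feature is that the tiling covers \emph{every} vertex of $Q$, so the $e^{-\lambda|T_i|}$ loss is cancelled tile-by-tile: the singleton terms $|T_i|\,e^{-4\beta}$ exactly cancel the $-\lambda|T_i|$ loss to leading order (the "perfect cancellation" at $\lambda=\lambda_w$), and the tooth term $A_i$ (an adjacent pair) supplies the residual $e^{-6\beta}$. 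Replacing your matching with such a tiling, and then using $|\partial_e A|\le\sum_i|\partial_e A_i|$ to turn the sum over $A\subseteq Q$ into a product of per-tile sums, would repair the argument; as written, it does not yield the stated bound.
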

\begin{proof}
    This proof follows the idea of the proof of \cite[Thm~1.2]{FeldheimYang23}, with some necessary modifications in order to generalize to the above setting. First of all, we have that 
\begin{equation}\label{eq:U_A-weight-change}
    \wP_S^h(U_A\varphi) \geq \wP_S^h(\varphi)\exp{(-\beta\Delta S' - \beta|\partial_e A| - \beta|\partial_e W| - \lambda(|\Qtwo(\varphi)| - |\Qtwo(U_A\varphi)|))}\,.
\end{equation}
Because of the assumption that $\varphi_v \geq 1$ for all $v \in S$ which are in the exterior boundary of $S'$, the isolated zeroes of $\varphi$ in $S \setminus S'$ remain the same no matter how we change $\varphi$ inside of $S'$. Hence, we have that 
\begin{equation}
    |\Qtwo(\varphi)| - |\Qtwo(U_A\varphi)| = |\Qtwo(\varphi)\cap S'| - |\Qtwo(U_A\varphi) \cap S'|\,.
\end{equation}
Now consider a tiling $\cT = \{T_i\}$ of the region $\Qtwo(\varphi) \cap S' \setminus W$ by the tiles in \cref{fig:tiles} (and their rotations). (It is a simple geometric fact that any finite subset of $\Z^2$ can be tiled in this way as long all its connected components have size at least~$2$.) 
Let $A_i = A \cap T_i$, and let $\Qtwo(A_i)= \Qtwo(U_A\varphi)\cap A_i$. Note that we always have $\Qtwo(U_A\varphi) \subseteq \Qtwo(\varphi)$, so we can write 
\begin{equation*}
    |\Qtwo(U_A\varphi) \cap S'| = \sum_i |\Qtwo(A_i)| + |\Qtwo(U_A\varphi)\cap W| \geq \sum_i |\Qtwo(A_i)|\,,
\end{equation*} 
and
\begin{equation*}
    |\Qtwo(\varphi)\cap S'| = \sum_{i} |T_i| + |\Qtwo(\varphi) \cap W| \leq \sum_{i} |T_i| + |W|\,.
\end{equation*}

\begin{figure}
    \begin{tikzpicture}

    \begin{scope}[scale=0.75]

    \node[circle,scale=0.4,fill=gray] (u1) at (0,0) {};
    \node[circle,scale=0.4,fill=gray] (v1) at (1,0) {};
    \draw [black] (u1)--(v1);

    \node[circle,scale=0.4,fill=gray] (u2) at (2,0) {};
    \node[circle,scale=0.4,fill=gray] (v2) at (3,0) {};
    \node[circle,scale=0.4,fill=gray] (w2) at (4,0) {};
    \draw [black] (u2)--(v2)--(w2);

    \node[circle,scale=0.4,fill=gray] (u3) at (5,0) {};
    \node[circle,scale=0.4,fill=gray] (v3) at (6,0) {};
    \node[circle,scale=0.4,fill=gray] (w3) at (6,-1) {};
    \draw [black] (u3)--(v3)--(w3);
    

    \node[circle,scale=0.4,fill=gray] (u4) at (7,0) {};
    \node[circle,scale=0.4,fill=gray] (v4) at (8,0) {};
    \node[circle,scale=0.4,fill=gray] (w4) at (9,0) {};
    \node[circle,scale=0.4,fill=gray] (x4) at (8,-1) {};
    \draw [black] (u4)--(v4)--(w4);
    \draw [black] (v4)--(x4);
    

    \node[circle,scale=0.4,fill=gray] (u5) at (10,0) {};
    \node[circle,scale=0.4,fill=gray] (v5) at (11,0) {};
    \node[circle,scale=0.4,fill=gray] (w5) at (12,0) {};
    \node[circle,scale=0.4,fill=gray] (x5) at (11,-1) {};
    \node[circle,scale=0.4,fill=gray] (y5) at (11,1) {};
    \draw [black] (u5)--(v5)--(w5);
    \draw [black] (x5)--(v5);
    \draw [black] (v5)--(y5);
    

    \end{scope}
    
    \end{tikzpicture}
    \caption{The five tiles that together can cover any realization of $\Qtwo$. }
    \label{fig:tiles}
\end{figure}
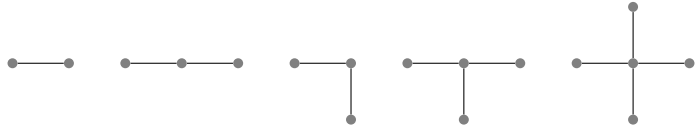

Combining the above, and summing~\eqref{eq:U_A-weight-change} over $A \subseteq \Qtwo(\varphi) \cap S' \setminus W$, we then have
\begin{align}\label{eq:sum-over-Vi}
    \sum_{A \subseteq \Qtwo(\varphi) \cap S' \setminus W} \wP_S^h(U_A\varphi) \geq \wP_S^h(\varphi)e^{-\beta \Delta S' -\beta|\partial_e W|-\lambda|W|}\prod_{i=1}^{|\cT|}e^{-\lambda|T_i|}\sum_{A_i \subseteq T_i}e^{-\beta|\partial_e A_i| + \lambda|\Qtwo(A_i)|}\,,
\end{align}
since summing over $A \subseteq \Qtwo(\varphi) \cap S' \setminus W$ is the same as summing over subsets of each of the covering tiles, and if $A = \bigcup_i A_i$, then $|\partial_e A| \leq \sum_i |\partial_e A_i|$. Furthermore, for each of the five possible choices (up to isometry) of the tile $T$, it was calculated in~\cite[Lemma 4.1]{FeldheimYang23} that 
\begin{equation}
    e^{-\lambda|T|}\sum_{A \subseteq T}e^{-\beta|\partial_e A| + \lambda|\Qtwo(A)|} \geq 1 + \frac{1}{2}e^{-6\beta}\,.
\end{equation}
Plugging this bound into \cref{eq:sum-over-Vi}, and noting that the number of tiles in $\cT$ is at least $|\Qtwo(\varphi) \cap S'\setminus W|/5$ (since the maximum number of vertices in a tile is 5), we have that 
\begin{equation*}
     \sum_{A \subseteq \Qtwo(\varphi) \cap S' \setminus W} \wP_S^h(U_A\varphi) \geq \wP_S^h(\varphi)e^{-\beta  \Delta S' -\beta|\partial_e W|-\lambda|W|}(1+\frac{1}{2}e^{-6\beta})^{|\Qtwo(\varphi) \cap S'\setminus W|/5}\,.\qedhere
\end{equation*}
\end{proof}
\begin{lemma}\label{lem:uniqueness-of-lifting-map}
    For any $\varphi, \varphi'$ such that $\varphi$ and $\varphi'$ are $\geq 1$ on the exterior boundary of $S'$, and any $A \subseteq \Qtwo(\varphi) \cap S' \setminus W$, $A' \subseteq \Qtwo(\varphi') \cap S' \setminus W$ such that either $\varphi \neq \varphi'$ or $A \neq A'$, we have
        \begin{equation*}
            U_A\varphi \neq U_{A'}\varphi'.
        \end{equation*}
\end{lemma}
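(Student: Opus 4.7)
The plan is to argue that the map $(\varphi,A)\mapsto U_A\varphi$ is injective by explicitly inverting it: given $U_A\varphi$, I will recover both $\varphi$ and the subset $A$ from the constraint that $\varphi\in\widetilde\Omega$ together with the boundary assumption $\varphi_v\ge 1$ on the exterior boundary of $S'$.

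First, suppose $U_A\varphi = U_{A'}\varphi'$. On $(S')^c\cup W$ the lifting acts trivially, so $\varphi$ and $\varphi'$ agree there immediately. It therefore remains to identify the values on $S'\setminus W$, and for this the key step is showing $A=A'$. Suppose toward a contradiction that some $v\in A\setminus A'$ exists. Since $v\in A\subseteq \Qtwo(\varphi)$ and $\varphi\in\widetilde\Omega$, adjacency of two sites at height $\le 0$ forces them both to be $0$; hence $\varphi_v=0$ and $(U_A\varphi)_v=0$. On the other hand, $v\notin W\cup A'$ gives $(U_{A'}\varphi')_v=\varphi'_v+1$, so $\varphi'_v=-1$. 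Now apply $\widetilde\Omega$ to $\varphi'$: the value $-1$ at $v$ forces $\varphi'_w\ge 1$ for every neighbor $w\sim v$.

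Next I exploit the membership $v\in \Qtwo(\varphi)$: this produces some neighbor $w\sim v$ with $\varphi_w=0$ (again using $\widetilde\Omega$ for $\varphi$). The boundary hypothesis $\varphi_u\ge 1$ on the exterior boundary of $S'$ rules out $w\in(S')^c$, so $w\in S'$. If $w\in W$, then $\varphi_w=\varphi'_w$, incompatible with $\varphi_w=0$ and $\varphi'_w\ge 1$. Otherwise $w\in S'\setminus W$, and I compare the two sides of the assumed equality $(U_A\varphi)_w=(U_{A'}\varphi')_w$: the left is $0$ or $1$ (depending on whether $w\in A$), while the right is $\varphi'_w+\mathbf{1}_{w\notin A'}\ge 1$. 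This forces $(U_A\varphi)_w=1$ and then $\varphi'_w=0$ (with $w\notin A'$); but $v\sim w$ with $\varphi'_v=-1$ and $\varphi'_w=0$ violates $\widetilde\Omega$ for $\varphi'$, a contradiction. The symmetric argument rules out $A'\setminus A$, so $A=A'$.

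Finally, with $A=A'$ in hand, the identity $U_A\varphi=U_A\varphi'$ gives $\varphi=\varphi'$ on $W\cup A$ (directly) and on $S'\setminus(W\cup A)$ (by subtracting $1$), completing the recovery. The main obstacle is the middle step: a site $v$ at height $0$ in $U_A\varphi$ is inherently ambiguous (it could be an original $A$-site at height $0$, or a lifted isolated $-1$), and resolving it requires precisely the $\widetilde\Omega$ single-site rule together with the assumption $\varphi\ge 1$ on the exterior boundary of $S'$, which is what lets the neighbor $w$ produced by $v\in\Qtwo(\varphi)$ be pinned inside $S'$ and cause the contradiction.
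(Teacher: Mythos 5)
Your argument is correct and is essentially the same as the paper's: both recover $A$ from $U_A\varphi$ by exploiting the $\widetilde\Omega$ constraint and the assumption $\varphi\ge 1$ on the exterior boundary of $S'$ to distinguish a lifted isolated $-1$ (all of whose neighbors sit at height $\ge 2$, or $\ne 0$ in $W$, after lifting) from an unlifted $\Qtwo$-zero in $A$ (which has a neighbor at height $0$ or $1$ in $S'$). The paper phrases this as an explicit decoding rule $\cZ_1$ rather than a proof by contradiction, but the two are equivalent; the only small point you glossed over is the subcase $\varphi'_w=1,\ w\in A'$ when forcing $\varphi'_w+\mathbf{1}_{w\notin A'}=1$, which is immediately excluded since $A'\subseteq\Qtwo(\varphi')$ requires $\varphi'_w\le 0$.
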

\begin{proof}
It suffices to show that we can recover $\varphi$ from $U_A\varphi$. We first show how we can recover the set $A$ given $U_A\varphi$. Let $\cZ$ be the set of zeroes of $U_A\varphi$ inside $S'\setminus W$. Observe that $\cZ$ is equal to the disjoint union of $A$ and the set $\{z \in \cZ : \varphi_z = -1\}$. We claim that $A$ is equal to the set
\[\cZ_1 := \{z \in \cZ: \exists v \in S' \setminus W, v \sim z, U_A\varphi_v \in \{0, 1\}\} \cup \{z \in \cZ: \exists v\in W, v \sim z, U_A\varphi_v = 0\}\,.\]

To show $A \subseteq \cZ_1$, note that for all $z \in A$, there must be some $v \sim z$ such that $\varphi_v = 0$ (since $A \subseteq \Qtwo(\varphi)$). As $\varphi$ is $\geq 1$ on the exterior boundary of $S'$, such $v$ must be also be in $S'$. If $v \in W$, then we know $U_A\varphi_v = \varphi_v = 0$. If $v \notin W$, then the fact that $U_A\varphi$ either keeps the value of $\varphi$ or increases it by 1 implies that $U_A\varphi_w \in \{0, 1\}$.

We show that $\cZ_1 \subseteq A$ by showing that $\{z \in \cZ : \varphi_z = -1\} \subseteq \cZ \setminus \cZ_1 =: \cZ_2$. First note that we can write 
\begin{align*}
    \cZ_2 = \{z \in \cZ: \forall v \in S', v \sim z, v \notin W, U_A\varphi_v \geq 2\} \cap \{z \in \cZ: \forall v, v \sim z, v \in W, U_A\varphi_v \neq 0\}\,.
\end{align*}

Observe that for $z$ such that $\varphi_z = -1$, any $v \sim z$ must be such that $\varphi_v \geq 1$ by definition of $\widetilde \Omega$. In particular, we know that $v \notin A$. By definition of $U_A\varphi$, in the case that $v \in S' \cap A^c \cap W^c$, then $U_A\varphi_v = \varphi_v + 1$, and so $U_A\varphi_v \geq 2$. In the case that $v \in W$, then $U_A\varphi_v = \varphi_v \geq 1$, so in particular $U_A\varphi_v \neq 0$. Since $\cZ_1$ only depends on the values of $U_A\varphi$, this shows we can recover the set $A$.

Once we have the set $A$, we can easily recover $\varphi$ from $U_A\varphi$ by taking $\varphi_z = U_A\varphi_z$ if $z \in (S')^c \cup W \cup A$ and taking $\varphi_z = U_A\varphi_z - 1$ otherwise. 
\end{proof}

When $S = S' = \Lambda_n$, $h = 0$, and $W = \emptyset$, Lemma~\ref{lem:lifting-map} corresponds to the result of \cite[Thm~1.2]{FeldheimYang23} that $\Qtwo(\varphi)\leq C_\beta n$ (e.g., this bound holds for the choice $C_\beta = e^{7\beta}$). To make the present paper self-contained ($|\Qtwo(\varphi)| = O(n)$ will be needed in \cref{sec:LB}), we show how to derive said bound as a corollary of the above. 

\begin{corollary}[{\cite[Thm.~1.2]{FeldheimYang23}}]\label{cor:FY-q2+-bound}
Let $\beta$ be large enough, and fix $C_\beta>80 \beta e^{6\beta}$. Then
    \begin{equation*}
        \wP(|\Qtwo(\varphi)| \geq C_\beta n) \leq e^{-n(\frac{C_\beta}{20}e^{-6\beta} - 4\beta)}\,.
    \end{equation*}
\end{corollary}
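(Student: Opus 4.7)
The plan is to apply \cref{lem:lifting-map} and \cref{lem:uniqueness-of-lifting-map} with the simplest possible parameters, namely $S=S'=\Lambda_n$, $W=\emptyset$, and $h=0$. With $S'=S$ the set of vertices of $S$ on the exterior boundary of $S'$ is empty, so the hypothesis ``$\varphi_v\ge 1$ there'' is vacuous and \cref{lem:lifting-map} applies to every $\varphi\in\widetilde\Omega$.

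First I would bound the boundary term $\Delta S'$. Since $(S')^c=\Lambda_n^c$ carries the boundary condition $\varphi\equiv 0$, each of the at most $4n$ boundary edges of $\Lambda_n$ contributes at most $1$ in absolute value to $\Delta S'$, so $|\Delta S'|\le 4n$. With $W=\emptyset$, \cref{lem:lifting-map} then gives, for every $\varphi\in\widetilde\Omega$,
$$\sum_{A\subseteq \Qtwo(\varphi)} \wP(U_A\varphi) \;\ge\; \wP(\varphi)\,e^{-4\beta n}\Bigl(1+\tfrac12 e^{-6\beta}\Bigr)^{|\Qtwo(\varphi)|/5}.$$

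Next I would sum this inequality over all $\varphi$ with $|\Qtwo(\varphi)|\ge C_\beta n$. On this event the right-hand side is at least $\wP(\varphi)\,e^{-4\beta n}(1+\tfrac12 e^{-6\beta})^{C_\beta n/5}$, while \cref{lem:uniqueness-of-lifting-map} (with $W=\emptyset$) guarantees that as $(\varphi,A)$ ranges over all admissible pairs, the lifted configurations $U_A\varphi$ are pairwise distinct. Hence the double sum $\sum_\varphi\sum_A \wP(U_A\varphi)$ is at most the total mass $1$, and rearranging yields
$$\wP\bigl(|\Qtwo(\varphi)|\ge C_\beta n\bigr) \;\le\; e^{4\beta n}\Bigl(1+\tfrac12 e^{-6\beta}\Bigr)^{-C_\beta n/5}.$$

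Finally, using the elementary bound $\log(1+x)\ge x/2$ valid on $[0,1]$ (applied with $x=\tfrac12 e^{-6\beta}$, which lies in $[0,1]$ for $\beta$ large), the last factor is at most $\exp\!\bigl(-\tfrac{C_\beta}{20}e^{-6\beta}\,n\bigr)$, which combined with the $e^{4\beta n}$ prefactor gives the claimed bound. There is no real obstacle beyond verifying the boundary count and that the two lemmas can be invoked with $W=\emptyset$; the hypothesis $C_\beta>80\beta e^{6\beta}$ is exactly what is needed to make the exponent $\tfrac{C_\beta}{20}e^{-6\beta}-4\beta$ positive, so that the corollary is nontrivial.
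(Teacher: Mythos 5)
Your proposal matches the paper's proof essentially verbatim: same choice $S=S'=\Lambda_n$, $W=\emptyset$, $h=0$, same invocation of \cref{lem:lifting-map} and \cref{lem:uniqueness-of-lifting-map}, and the same elementary inequality $1+x\ge e^{x/2}$ to extract the exponent. The only cosmetic difference is that you bound $|\Delta S'|\le 4n$ while the paper observes that $\Delta S'=4n$ exactly (since the zero boundary condition together with $\varphi\in\widetilde\Omega$ forces $\varphi_u\ge 0$ on the inner boundary of $\Lambda_n$), but both give the needed $e^{-\beta\Delta S'}\ge e^{-4\beta n}$, so this is immaterial.
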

\begin{proof}
    When $S = S' = \Lambda_n$, $h = 0$, and $W = \emptyset$, we have $\Delta S' = 4n$ and $\wP_{S}^h= \wP_{\Lambda_n}^0 = \wP$. By \cref{lem:lifting-map}, 
    \begin{equation}
        \sum_{A\subseteq \Qtwo(\varphi)} \wP(U_A \varphi) \geq \wP(\varphi)e^{-4\beta n} (1+\frac{1}{2} e^{ - 6\beta})^{|\Qtwo(\varphi)|/5}.
    \end{equation}
    Furthermore, when $S' = S$, the condition that $\varphi$ is $\geq 1$ on the exterior boundary of $S'$ disappears, and we can sum both sides of the above inequality over all $\varphi$ such that $|\Qtwo(\varphi)| \geq C_\beta n$. Then, \cref{lem:uniqueness-of-lifting-map} shows that the left-hand side is upper bounded by 1, and we obtain 
    \begin{equation}
        \wP(|\Qtwo(\varphi)| \geq C_\beta n) \leq e^{4\beta n - \frac{C_\beta}{20}ne^{-6\beta}}\,,
    \end{equation}
    where we use the fact that $1 + x \geq e^{\frac{1}{2}x}$ for $x \in [0, 1]$.
\end{proof}

\subsection{Identifying the rate for downward tooth-like oscillations}
An implication of Lemma~\ref{lem:lifting-map} is that down-contours have exponential tails. Using that, we can show that the rate for having $\varphi_x\le 0,\varphi_y\le 1$ is governed by the minimal weight way to generate this, the tooth-like spike which costs $e^{ - 6\beta h+2\beta}$. 

\begin{lemma}\label{lem:tooth} There exists $\epsilon_\beta>0$ (going to zero as $\beta \to\infty$) such that for every simply connected $S$, every $h\ge 1$, and every pair of adjacent sites $x,y\in S$,
\[ \wP_S^h (\varphi_x \leq 0,\varphi_y \leq 1) \leq (1+\epsilon_\beta)e^{ - 6\beta h + 2\beta}\,.\]
\end{lemma}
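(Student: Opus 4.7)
The plan is a Peierls-type enumeration: for each $\varphi \in E := \{\varphi_x \le 0, \varphi_y \le 1\}$ we identify the ``downward deviation region'' near $(x,y)$ and show that the tooth-like spike is the dominant contribution. Since the flat configuration $\varphi \equiv h$ contributes weight $1$ to the partition function $Z$ (as $h \ge 1$ precludes any $\Qtwo$ contribution), one has $Z \ge 1$ and hence the pointwise bound $\wP_S^h(\varphi) \le \exp(-\beta H(\varphi) + \lambda|\Qtwo(\varphi)|)$. For each $\varphi \in E$, let $T = T(\varphi) \subseteq S$ be the connected component of $\{v : \varphi_v \le h - 1\}$ containing $x$ (for $h \ge 2$, $T$ automatically contains $y$; for $h = 1$ we case-split on whether $\varphi_y = 1$). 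Writing $\tau = \varphi|_T$ and $\xi = \varphi|_{T^c}$, and using that $\Qtwo(\varphi) \subseteq T$ since $\xi_u \ge h \ge 1$ on $\partial T$, the Hamiltonian factors into interior, boundary, and exterior pieces; summing over $\xi$ and comparing with the restriction of $Z$ to configurations with $\varphi \equiv h$ on $T$ yields
\begin{align*}
    \wP_S^h(E) \le \sum_{T \ni x} e^{-\beta h |\partial_e T|} \cdot Z_T^{\mathrm{int}},
\end{align*}
where $Z_T^{\mathrm{int}} = \sum_\tau \exp\bigl(-\beta H_{\mathrm{int}}(\tau) + \beta \sum_{v \in T} d_\partial(v)\tau_v + \lambda|\Qtwo(\tau)|\bigr)$ sums over valid interior configurations, with $d_\partial(v)$ denoting the number of $T^c$-neighbors of $v$.

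For the minimal $T = \{x, y\}$ one has $|\partial_e T| = 6$, and the tooth configuration $(\tau_x, \tau_y) = (0, 1)$ contributes the leading term $e^{-6\beta h + 2\beta}$, while the remaining interior choices (such as $(0,0)$, which also picks up a $\Qtwo$-gain of $e^{2\lambda_w} \approx 1$, or $(-1, 1)$) each contribute at most $e^{-2\beta}$ times the tooth weight. For $|T| \ge 3$, each additional vertex in $T$ increases either $|\partial_e T|$ or the internal gradient cost by at least $2$, yielding a factor of at most $e^{-2\beta}$ per extra vertex; since the number of connected shapes of size $k$ containing $\{x, y\}$ is bounded by $C^k$ for some universal $C$, the total subdominant contribution sums to $O(e^{-\beta}) \cdot e^{-6\beta h + 2\beta}$ for $\beta$ large.

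The main obstacle is controlling the contribution from configurations in which $T$ contains many zero-vertices, since the pinning potential $\lambda|\Qtwo|$ can in principle offset the cost of a larger down-contour. This is addressed by invoking Lemma~\ref{lem:lifting-map} with $W = \{x, y\}$, a choice which preserves the event under the lifting map (the values $\varphi_x, \varphi_y$ are held fixed): the factor $(1 + \tfrac{1}{2}e^{-6\beta})^{|\Qtwo(\tau) \cap T \setminus W|/5}$ from that lemma precisely dominates the maximal contribution of the pinning term to $Z_T^{\mathrm{int}}$, so that configurations with large $|\Qtwo(\tau)|$ remain exponentially suppressed. Combining these contributions yields the desired bound $\wP_S^h(E) \le (1 + \epsilon_\beta) e^{-6\beta h + 2\beta}$ with $\epsilon_\beta = O(e^{-\beta}) \to 0$ as $\beta \to \infty$.
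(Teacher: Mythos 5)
Your outline shares the paper's key ingredients---Peierls enumeration of the down-region around $(x,y)$, identification of the tooth at the minimal down-contour, and an appeal to Lemma~\ref{lem:lifting-map} with $W=\{x,y\}$---but the way you propose to close the pinning issue does not work. You assert that the factor $(1+\tfrac12 e^{-6\beta})^{|\Qtwo(\tau)\cap T\setminus W|/5}$ from Lemma~\ref{lem:lifting-map} ``precisely dominates the maximal contribution of the pinning term to $Z_T^{\mathrm{int}}$,'' but this is numerically false: each $\Qtwo$-vertex contributes $\lambda\approx e^{-4\beta}$ to $\log Z_T^{\mathrm{int}}$, whereas $\tfrac15\log(1+\tfrac12 e^{-6\beta})\approx\tfrac1{10}e^{-6\beta}$, which is exponentially smaller. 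The cancellation of the pinning in Lemma~\ref{lem:lifting-map} happens \emph{inside} its proof, through the summation over subsets $A\subseteq\Qtwo$ that are held fixed (compare the right-hand sides of~\eqref{eq:U_A-weight-change} and~\eqref{eq:sum-over-Vi}); the $(1+\tfrac12 e^{-6\beta})$ term is a residual \emph{extra} gain after that cancellation, not the thing that absorbs $e^{\lambda|\Qtwo|}$. Relatedly, your claim that ``each additional vertex in $T$ increases either $|\partial_e T|$ or the internal gradient cost by at least $2$'' fails when a new vertex fills an interior hole of $T$: no boundary edges or gradients are added, while a $\Qtwo$-token may be gained. That is exactly the scenario the lifting map exists to handle, so the per-vertex geometric suppression you invoke is not available without actually carrying out the lift.

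There is also a structural mismatch: the conclusion of Lemma~\ref{lem:lifting-map} compares $\wP_S^h$-weights of $\varphi$ against those of its lifts $U_A\varphi$, whereas what your outline needs is a bound on the interior partition function $Z_T^{\mathrm{int}}$; converting one into the other would require redoing the tiling argument within your decomposition, which is not done. The paper avoids both issues by staying entirely at the level of $\wP_S^h$-weights: it splits the claim into~\eqref{eq:pair-reaching-ht-1} and~\eqref{eq:tooth-given-pair}, conditions on the outermost down-contour $\cC_{x,y}(\varphi)$ containing $\{x,y\}$, applies Lemma~\ref{lem:lifting-map} (with $W=\{x,y\}$) and Lemma~\ref{lem:uniqueness-of-lifting-map} to show $|\cC_{x,y}(\varphi)|=6$ with conditional probability $1-\epsilon_\beta$, and then uses explicit height-shifting maps on $\{x,y\}$ alone to extract the rates $e^{-6\beta(h-1)}$ and $e^{-4\beta}$. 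Keeping the Peierls sum over actual contours with exact weight comparisons is what makes the pinning bookkeeping close; the hand-estimated $Z_T^{\mathrm{int}}$ route would need substantial additional work to be made rigorous.
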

\begin{proof}
It suffices to show  
\begin{align}
    \wP_S^h (\varphi_x \leq 1,\varphi_y \leq 1) & \leq (1+\epsilon_\beta)e^{-6\beta (h-1)}\,, \qquad \text{and} \label{eq:pair-reaching-ht-1}\\ 
    \wP_S^h (\varphi_x \leq 0 \mid \varphi_x \leq 1, \varphi_y \leq 1) & \leq (1+\epsilon_\beta)e^{-4\beta}, \label{eq:tooth-given-pair}
\end{align}
To show \cref{eq:pair-reaching-ht-1}, we want to say that with probability $1 - \epsilon_\beta$, conditional on $\varphi_x \leq 1, \varphi_y \leq 1$, the outermost down-contour containing $x$ and $y$ is the loop bounding exactly $\{x,y\}$. To begin, fix a contour $C_{x, y}$ containing $x$ and $y$. Let $\cC_{x, y}(\varphi)$ denote the outermost down-contour containing both $x, y$ in $\varphi$. Let $\cB(C_{x, y})$ be the event that $\varphi_x \leq 1, \varphi_y \leq 1$, and $\cC_{x, y}(\varphi) = C_{x, y}$. Let $S'$ denote the interior of $C_{x, y}$. For every $\psi \in \cB(C_{x, y})$, we know that $\psi$ is $\geq h$ on the exterior boundary of $C_{x, y}$ by definition of $C_{x, y}$ being an outermost down-contour. Hence, we can apply \cref{lem:lifting-map} with $W = \{x, y\}$ and $S' = \Int(C_{x, y})$ to obtain that
\begin{align*}
    \sum_{A \subseteq \Qtwo(\psi)\cap \Int(C_{x, y}) \setminus \{x, y\}} \wP_S^h(U_A\psi) &\geq \wP_S^h(\psi)e^{\beta|C_{x, y}| - 6\beta - 2\lambda}(1+\frac{1}{2}e^{-6\beta})^{|\Qtwo(\psi)\cap \Int(C_{x, y}) \setminus \{x, y\}|/5}\\
    &\geq \wP_S^h(\psi)e^{\beta|C_{x, y}| - 6\beta - 2\lambda}\,.
\end{align*}
Furthermore, note that since $U_A\psi$ did not change the values of $\psi$ on $x, y$, then we still have $U_A\psi_x\leq 1, U_A\psi_y\leq 1$. Hence, summing over $\psi \in \cB(C_{x, y})$ above and applying \cref{lem:uniqueness-of-lifting-map}, we have
\begin{equation}
    \sum_{\psi \in \cB(C_{x, y})}\wP_S^h(\psi)e^{\beta|C_{x,y}|-6\beta - 2\lambda} \leq \sum_{\psi \in \cB(C_{x, y})}\sum_{A\subseteq \Qtwo(\psi)\cap \Int(C_{x, y}) \setminus \{x, y\}}\wP_S^h(U_A\psi) \leq \wP_S^h(\varphi_x \leq 1, \varphi_y\leq 1)\,,
\end{equation}
and in particular
\begin{equation}
     \wP_S^h(\cB(C_{x, y})) \leq e^{-\beta|C_{x,y}| + 6\beta +2\lambda}\wP_S^h(\varphi_x \leq 1, \varphi_y\leq 1)\,.
\end{equation}

Since the number of contours containing $x, y$ with length $l$ is at most $C^l$ for some universal constant $C$, we then have
\begin{align}
    \wP_S^h(\varphi_x \leq 1, \varphi_y \leq 1, |\cC_{x, y}(\varphi)| > 6) &= \sum_{l \geq 8}\sum_{C_{x, y}: |C_{x, y}| = l}\wP_S^h(\cB(C_{x, y})) \\
    &\leq \sum_{l \geq 8}C^le^{-\beta(l-6)+2\lambda}\wP_S^h(\varphi_x \leq 1, \varphi_y\leq 1)\nonumber\\
    &\leq C'e^{-2\beta}\wP_S^h(\varphi_x \leq 1, \varphi_y\leq 1)\,,\nonumber
\end{align}
and in particular that
\begin{equation}\label{eq:dip-is-2-column}
    \wP_S^h(|\cC_{x, y}(\varphi)| = 6 \mid \varphi_x \leq 1, \varphi_y \leq 1) \geq (1-\epsilon_\beta)\,.
\end{equation}
Now, with the above in hand, we claim that
\begin{equation}\label{eq:2-column-dip-probability}
    \wP_S^h(|\cC_{x, y}(\varphi)| = 6, \varphi_x \leq 1, \varphi_y \leq 1) \leq (1+\epsilon_\beta)e^{-6\beta(h-1)}\,.
\end{equation}
First consider the case where $\varphi_x = \varphi_y = 0, |\cC_{x, y}(\varphi)| = 6$. On such configurations, consider the bijective map sending $\varphi$ to $T\varphi$, where  $T\varphi_x = \varphi_x + 6h$, $T\varphi_y = \varphi_y +6h$, and $T\varphi_w = \varphi_w$ for $w \notin \{x, y\}$. The fact that $|\cC_{x, y}(\varphi)| = 6$ implies that for any neighbors $w$ of $\{x, y\}$, we have $\varphi_w \geq h$, so that $\wP_S^h(\varphi) \leq \wP_S^h(T\varphi)e^{-6\beta h}e^{2\lambda}$. Summing over $\varphi$ in this case proves that
\begin{equation*}
    \wP_S^h(|\cC_{x, y}(\varphi)| = 6, \varphi_x =\varphi_y =0) \leq e^{-6\beta h+2\lambda}.
\end{equation*}
Otherwise, if we have that at least one of $\varphi_x$ or $\varphi_y$ is not equal to 0, we consider the map where $T\varphi_x = \varphi_x + 6(h-1), T\varphi_y = \varphi_y + 6(h-1)$, and $T\varphi_w = \varphi_w$ for $w \notin \{x, y\}$. In this case, neither $x$ nor $y$ are in $\Qtwo(\varphi)$, and so we have $\wP_S^h(\varphi) \leq \wP_S^h(T\varphi)e^{-6\beta(h-1)}$. Summing over $\varphi$ in this case proves that
\begin{equation*}
    \wP_S^h(|\cC_{x, y}(\varphi)| = 6, \varphi_x \leq 1, \varphi_y\leq 1, \varphi_x \ne 0 \text{ or } \varphi_y \ne 0) \leq e^{-6\beta(h-1)}.
\end{equation*}
The above two displays prove \cref{eq:2-column-dip-probability}, which combined with \cref{eq:dip-is-2-column} then proves \cref{eq:pair-reaching-ht-1}. 

To prove \cref{eq:tooth-given-pair}, first note that the exact same proof of \cref{eq:dip-is-2-column} yields
\begin{equation}
    \wP_S^h(|\cC_{x, y}(\varphi)| = 6 \mid \varphi_x \leq 0, \varphi_y \leq 1) \geq (1-\epsilon_\beta)
\end{equation}
(simply enforce the starting configurations $\psi$ to satisfy $\psi_x \leq 0, \psi_y \leq 1$, and then apply the same maps $U_A\psi$). Thus, noting that
\begin{align*}
    \wP_S^h(\varphi_x \leq 0 \mid \varphi_x \leq 1, \varphi_y \leq 1) &= \frac{\wP_S^h(|\cC_{x, y}(\varphi)| = 6 \mid \varphi_x \leq 1, \varphi_y \leq 1)}{\wP_S^h(|\cC_{x, y}(\varphi)| = 6 \mid \varphi_x \leq 0, \varphi_y \leq 1)} \wP_S^h(\varphi_x \leq 0 \mid \varphi_x \leq 1, \varphi_y \leq 1, |\cC_{x, y}(\varphi)| = 6)\\
    &\leq (1+\epsilon_\beta)\wP_S^h(\varphi_x \leq 0 \mid \varphi_x \leq 1, \varphi_y \leq 1, |\cC_{x, y}(\varphi)| = 6),
\end{align*}
it suffices to bound the right side above by $(1+\epsilon_\beta)e^{-4\beta}$. Let $\varphi$ be such that $\varphi_x \leq 0, \varphi_y \leq 1, |\cC_{x, y}(\varphi)| = 6$. If $\varphi_x = \varphi_y = 0$, then consider the map which sets $T\varphi_x = T\varphi_y = 1$ and follow the same reasoning as before to obtain 
\begin{equation}\label{eq:zero-zero-case}
    \wP_S^h(\varphi_x = \varphi_y = 0 \mid \varphi_x \leq 1, \varphi_y \leq 1, |\cC_{x, y}(\varphi)| = 6) \leq e^{-6\beta + 2\lambda}.
\end{equation}
If $\varphi_x = k \leq -1$, then $\varphi_y = 1$ by definition of $\widetilde \Omega$. Hence, it remains to consider when $\varphi_x \leq 0, \varphi_y = 1$. In this case, we can consider the map that sets $T\varphi_x = 1$ and obtain that
\begin{equation}
    \wP_S^h(\varphi_x = k \mid \varphi_x \leq 1, \varphi_y\leq 1, |\cC_{x, y}(\varphi)| = 6) \leq e^{-4(1-k)\beta},
\end{equation}
whence summing over $k \leq 0$ and combining with \cref{eq:zero-zero-case} proves that
\begin{equation*}
    \wP_S^h(\varphi_x \leq 0 \mid \varphi_x \leq 1, \varphi_y \leq 1, |\cC_{x, y}(\varphi)| = 6) \leq (1+\epsilon_\beta)e^{-4\beta}.
\end{equation*}
This concludes the proof of \cref{eq:tooth-given-pair} and hence of the lemma.
\end{proof}

\subsection{Tail bounds for up-contours}
Having identified the dominant mechanism for violating the floor constraint of $\wP$ in Lemma~\ref{lem:tooth}, we can show that at heights $h_n^*+1$ and higher, the gain from entropic repulsion is negligible compared to the boundary cost for an up-contour. 
Let $\cC^\uparrow_{S,h}$ denote the event that the boundary of $S$ is a up $h$-contour.
\begin{lemma}\label{lem:upper-bnd}
There exists an absolute constant $c_0>0$ such that, for every simply connected set $S$, 
\[ \wP( \cC_{S,h}^\uparrow) \leq \exp\big( -\beta |\partial S| + c_0 e^{-6\beta h +2\beta} |S|  \big)\,. 
\]
In particular, if $h = h_n^*+1$, then $\wP( \cC_{S,h_n^*+1}^\uparrow) \le \exp(- (\beta - \frac{c_0}{4})|\partial S|)$. 
\end{lemma}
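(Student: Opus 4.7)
The approach is a Peierls-type argument analogous to the construction behind Lemma \ref{lem:lifting-map}, but applied in the ``lowering'' direction. For $\varphi \in \Omega^\uparrow$ (configurations realizing the up $h$-contour $\partial S$), the natural modification is the shift $T\varphi := \varphi - \mathbf{1}_S$: since $\varphi \ge h$ on the interior boundary of $\partial S$ and $\varphi \le h-1$ on the exterior, lowering the interior by one removes the contour and reduces the boundary-edge disagreements across $\partial S$ by exactly $|\partial S|$, yielding a Gibbs gain of $e^{\beta|\partial S|}$. Two obstructions arise: (i) $T\varphi$ may fall outside $\widetilde\Omega$, precisely when $\varphi$ has an adjacent pair $(u,v)$ strictly interior to $S$ with $\varphi_u \le 0$ and $\varphi_v \le 1$ (producing the forbidden $(-1,0)$-pair after the shift); and (ii) the pinning term $\lambda |\Qtwo|$ may change, since height-$1$ pairs in $\varphi$ become height-$0$ pairs in $T\varphi$. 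Both obstructions reduce to low-height pair events controlled by Lemma \ref{lem:tooth} at rate $\le (1+\epsilon_\beta)e^{-6\beta h + 2\beta}$ under $\wP_S^h$.

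To upgrade this per-pair estimate into the stated exponential-in-$|S|$ correction, I would mirror the tile-averaging construction of Lemma \ref{lem:lifting-map} in the lowering direction. Consider the family of maps $L_A\varphi := \varphi - \mathbf{1}_{S\setminus A}$, parameterized by subsets $A$ of interior sites of $S$ that are kept fixed so as to avoid the obstructions at those sites. A uniqueness property in the spirit of Lemma \ref{lem:uniqueness-of-lifting-map} ensures that distinct $(\varphi, A)$ produce distinct $L_A\varphi \in \widetilde\Omega$, so summing the weights gives $\sum_\varphi \sum_A \wP(L_A\varphi) \le 1$. The weight ratio decomposes as
\[
\frac{\wP(L_A\varphi)}{\wP(\varphi)} = \exp\bigl(\beta|\partial S| - \beta|\partial_e A| + \lambda(|\Qtwo(L_A\varphi)| - |\Qtwo(\varphi)|)\bigr).
\]
Tiling the low-height sites of $S$ by the five tiles of \cref{fig:tiles} and invoking a tile-local estimate in the spirit of \cite[Lemma~4.1]{FeldheimYang23}---but with the tooth-rate $e^{-6\beta h + 2\beta}$ of Lemma \ref{lem:tooth} replacing the singleton-$\Qtwo$ rate $e^{-6\beta}$---provides the per-tile gain that accumulates into the correction factor $\exp(c_0 e^{-6\beta h + 2\beta}|S|)$, yielding $\wP(\cC_{S,h}^\uparrow) \le e^{-\beta|\partial S| + c_0 e^{-6\beta h + 2\beta}|S|}$.

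For the ``in particular'' claim at $h = h_n^* + 1$, \eqref{eq:h-n-*} gives $h \ge \frac{1}{6\beta}\log n + \frac{1}{3}$, so $e^{-6\beta h + 2\beta} \le 1/n$. The isoperimetric inequality $|S| \le (|\partial S|/4)^2$ for simply connected lattice subsets (with SE-NW rounding), combined with the trivial $|\partial S| \le 4n$, yields $c_0 e^{-6\beta h + 2\beta}|S| \le \tfrac{c_0}{4}|\partial S|$, completing the claim. The main obstacle is the tile-averaging calculation in the lowering direction: Lemma \ref{lem:lifting-map}'s per-tile gain is anchored in the singleton-$\Qtwo$ cost $e^{-6\beta}$ arising from a lifting map, while here the relevant local obstruction has the tooth-dip rate $e^{-6\beta h + 2\beta}$ from Lemma \ref{lem:tooth}, and the $\Qtwo$ bookkeeping under the partial lowering $L_A$ must be reworked---in particular, ensuring that the $(-1,0)$-pair constraint and the change in $|\Qtwo|$ inside $S$ can be simultaneously absorbed by the five-tile covering.
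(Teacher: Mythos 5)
The overall shape of your argument is right: the Peierls map is the downward shift on $S$, the obstruction is a pair $\varphi_x\le 0$, $\varphi_y\le 1$ in the interior (which after shifting would violate $\widetilde\Omega$), and Lemma~\ref{lem:tooth} supplies the per-pair rate $e^{-6\beta h+2\beta}$. However, your mechanism for converting the per-pair estimate into the $\exp(c_0 e^{-6\beta h+2\beta}|S|)$ factor is not the paper's, and it has a genuine gap.

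The paper separates the argument into two clean pieces. First, it shows
\[
\wP\big(\cE_S \mid \cC_{S,h}^\uparrow\big) \ge \exp\!\big(-c_0 e^{-6\beta h+2\beta}|S|\big),
\]
where $\cE_S$ is the event that no adjacent pair in $S$ has $\varphi_x\le 0$, $\varphi_y\le 1$. This is done by (a) passing from $\wP$ to $\P$ via Observation~\ref{obs:relaxed-model}, (b) replacing the conditioning on $\cC_{S,h}^\uparrow$ by the boundary condition $h$ on $S^c$ via domain Markov and monotonicity, and then (c) factoring the intersection over pairs using FKG for $\P_S^h$ and applying Lemma~\ref{lem:tooth} to each pair. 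Second, on the good event $\cC_{S,h}^\uparrow\cap\cE_S$ the shift-down map is admissible and injective, giving $\wP(\cC_{S,h}^\uparrow\cap\cE_S)\le e^{-\beta|\partial S|}$. Dividing finishes the proof. Notice there is no tile-averaging anywhere: the gain is purely in the denominator, coming from FKG, not from entropy of the map.

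Your proposal instead tries to mirror the $U_A$ tile-averaging from Lemma~\ref{lem:lifting-map} in the lowering direction via $L_A\varphi = \varphi - \mathbf{1}_{S\setminus A}$, and ``invoke a tile-local estimate in the spirit of~\cite[Lemma 4.1]{FeldheimYang23} but with the tooth rate $e^{-6\beta h+2\beta}$ replacing $e^{-6\beta}$.'' This step does not make sense as stated: the tile-local estimate in Lemma~\ref{lem:lifting-map} is a \emph{deterministic} sum $\sum_{A\subseteq T} e^{-\beta|\partial_e A|+\lambda|\Qtwo(A)|} \ge (1+\tfrac12 e^{-6\beta})e^{\lambda|T|}$, whereas the tooth rate $e^{-6\beta h+2\beta}$ is a \emph{conditional probability} under $\wP_S^h$; these are not interchangeable objects, and there is no analogue of the FY tile computation in the lowering direction. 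You also have not addressed how the $\Qtwo$ bookkeeping and the $\beta|\partial_e A|$ boundary cost are controlled when $A$ is forced to contain the bad sites of $\varphi$ (rather than being a free subset of $\Qtwo$ as in the lifting map), and you acknowledge this yourself as ``the main obstacle.'' That obstacle is precisely the content of the lemma, so the proof is not complete.

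What is missing is the FKG/monotonicity route: the key observation that $\{\phi_x\le 0,\phi_y\le 1\}^c$ are all increasing events, so after conditioning on $\cC_{S,h}^\uparrow$ and monotonically dropping the boundary to $h$, FKG turns the per-pair estimate from Lemma~\ref{lem:tooth} directly into the product bound $\prod_{x\sim y}(1-(1+\epsilon_\beta)e^{-6\beta h+2\beta}) \ge \exp(-c_0 e^{-6\beta h+2\beta}|S|)$. No lowering-direction tile construction is needed.

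Your ``in particular'' step (via the isoperimetric inequality $|\partial S|\ge 4\sqrt{|S|}$ and $|\partial S|\le 4n$, giving $|S|\le n|\partial S|/4$, together with $e^{-6\beta(h_n^*+1)+2\beta}\le 1/n$) is correct and matches the paper.
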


\begin{proof}

For a simply connected domain $S$ (the interior of a contour), let $\cE_S$ denote the event that there does not exist a pair of adjacent sites $x,y\in S$ such that $\varphi_x \le 0$ and $\varphi_y\leq 1$.

\begin{observation}\label{obs:shift-down}
For every simply connected $S$ and integer $h\ge 1$, if $\varphi\in \mathcal C_{S,h}^\uparrow \cap \cE_{S}$, then the configuration $\varphi'$ obtained by shifting $\varphi\mapsto \varphi-1$ in the interior of $S$ keeps the configuration permissible (i.e., $\varphi'\in \widetilde \Omega$). 
\end{observation}

We begin by establishing 
\begin{align}\label{eq:admissibility-given-up-contour}
\wP(\cE_S \mid \cC_{S,h}^\uparrow ) \geq \exp\big(-c_0e^{-6\beta h+2\beta}|S|\big)\,,\end{align}
 as on $\mathcal E_S \cap \cC_{S,h}^\uparrow$ Observation~\ref{obs:shift-down} ensures we can shift down $S$ and gain a weight factor of $e^{ - \beta |\partial S|}$.  
 
Associate to every $\varphi\in\widetilde\Omega$ in the state space of $\wP$, the SOS configuration $\phi = \max\{\varphi, 0\}$ in the state space of $\P$. For every $\varphi\in\cE_S$, we see that $\psi$ satisfies the same property (it does not contain any adjacent pair of sites $x,y$ such that $\phi_x = 0$ and $\phi_y\in\{0,1\}$). In addition, since $h\geq 1$, we have that $\varphi\in\cC_{S,h}^\uparrow$ if and only if $\phi\in \cC_{S,h}^\uparrow$ in its corresponding space.
Thus, it suffices to bound from below
\[ \P\Big(\bigcap_{x\sim y} \big\{\phi_x \leq 0\,, \, \phi_y \leq 1\big\}^c \;\big|\; \cC_{S,h}^\uparrow\Big) \,.\]
By a routine reasoning, when conditioning on $\cC_{S,h}^\uparrow$ we can modify the external boundary of $S$ to $h$ (domain Markov, using that the internal boundary is at least $h$), and then use monotonicity (see, e.g.,~\cite[Sec.~4.1]{Lacoin-SOS1}; it is easy to check that the validity of Holley’s lattice condition is unaffected by the $\lambda$ tokens)
to remove the conditioning that the internal boundary is at least $h$ (as we will look to bound from below an increasing event), so it suffices to bound from below
\[ \P^h_S\Big(\bigcap_{x\sim y} \big\{\phi_x \leq 0\,, \, \phi_y \leq 1\big\}^c \Big) \,.\]
The proof is then concluded from \cref{lem:tooth} by FKG for $\P^h_S$ and the inequality $1 - x \geq \frac12 e^{-x}$ for $x \in [0, 1]$, using the equivalence of $\{\varphi_x\leq 0,\varphi_y\leq 1\}$ for $\P$ and $\wP$ as mentioned above. (The constant $c_0$ comes from enumerating over ordered pairs $(x, y)$ in $S$, and also absorbs the factor of $\frac12$ in the inequality $1 - x \geq \frac{1}{2}e^{-x}$.)

Having shown~\eqref{eq:admissibility-given-up-contour}, let us conclude the proof. We apply the map that shifts $\varphi \mapsto \varphi - 1$ in the interior of $S$ for every $\varphi\in \mathcal C^\uparrow_{S,h} \cap \mathcal E_S$, and obtain that
\begin{equation}\label{eq:admissibility-and-up-contour}
    \wP(\cC_{S,h}^\uparrow \cap \cE_S ) \leq e^{-\beta |\partial S|}\,.
\end{equation}
Dividing this bound by $\wP(\cE_s \mid \cC^\uparrow_{S,h})$ and applying~\eqref{eq:admissibility-given-up-contour}, we conclude the proof. 

When we take $h= h_n^*+1$, the exponential tail follows from $e^{ - 6\beta (h_n^* + 1)+2\beta}\le \frac{1}{n}$ and $|S|\le n|\partial S|/4$. 
\end{proof}

By the same reasoning, we have the following more general version of Lemma~\ref{lem:upper-bnd}.
\begin{corollary}\label{cor:upper-bnd-multiple-loops}
    For any family of disjoint simply connected sets $S_1,\ldots,S_m$ with compatible $\partial S_1,...,\partial S_m$, 
    \begin{align*}
        \wP(\cC_{S_1,h}^\uparrow,\ldots,\cC_{S_m,h}^\uparrow)\le \exp\Big(  - \beta \sum_{i}|\partial S_i| + c_0 e^{ - 6\beta h + 2\beta} \sum_i |S_i|\Big)\,.
    \end{align*}
    In particular, if $h = h_n^*+1$, then $\wP( \bigcap_{i}\cC_{S_i,h_n^*+1}^\uparrow) \le \exp(- (\beta - \frac{c_0}{4})\sum_{i}|\partial S_i|)$. 
\end{corollary}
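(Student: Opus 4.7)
The plan is to mirror the proof of \cref{lem:upper-bnd}, now applying the shift-down Peierls map simultaneously to all $m$ loops rather than to a single one, and then estimating the conditional probability of the good event $\cE := \bigcap_{i=1}^m \cE_{S_i}$ (in the notation of that proof) by using the domain Markov property of $\wP$ to decouple the $m$ interiors.

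For the first step, on $\bigcap_i \cC_{S_i,h}^\uparrow \cap \cE$ I would consider the map $\varphi \mapsto \varphi' := \varphi - \mathbf{1}_{\bigsqcup_i S_i}$, i.e.\ shift the interior of each $S_i$ down by one. Since the $S_i$ are pairwise disjoint with compatible boundaries, \cref{obs:shift-down} applied to each $S_i$ gives $\varphi' \in \widetilde\Omega$; the map is a bijection since the $(S_i)$ are fixed; and the $\beta$-disagreement budget strictly decreases by $\sum_i |\partial S_i|$ (across the union of the loops). Exactly as in \cref{lem:upper-bnd}, on $\cE$ one checks $|\Qtwo(\varphi')| \ge |\Qtwo(\varphi)|$---no $\Qtwo$-vertex outside $\bigsqcup_i S_i$ can have a partner on the interior vertex boundary of any $S_i$, since those are forced to be $\ge h \ge 1$---and disjointness means this analysis holds independently for each $S_i$. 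Summing the resulting weight inequality yields $\wP(\bigcap_i \cC_{S_i,h}^\uparrow \cap \cE) \le \exp(-\beta \sum_i |\partial S_i|)$.

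For the second step, I would condition on $\bigcap_i \cC_{S_i,h}^\uparrow$, which exactly specifies each $\partial S_i$ as an outermost up $h$-contour. By domain Markov for $\wP$ (valid since the $\beta$-term and the $\lambda|\Qtwo|$ term are both nearest-neighbor functionals), the interior configurations on the disjoint $S_1,\ldots,S_m$ are conditionally independent given the exterior. On each $S_i$, the same monotonicity argument as in \cref{lem:upper-bnd} allows replacing the conditioning (interior vertex boundary $\ge h$) by clean $h$-boundary conditions $\P_{S_i}^h$, since $\cE_{S_i}$ is decreasing; then \cref{lem:tooth} combined with FKG delivers $\P_{S_i}^h(\cE_{S_i}) \ge \exp(-c_0 e^{-6\beta h + 2\beta}|S_i|)$. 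Taking the product over $i$ and averaging over the exterior yields
\begin{equation*}
\wP\Big(\cE \,\Big|\, \bigcap_i \cC_{S_i,h}^\uparrow\Big) \ge \exp\Big(-c_0 e^{-6\beta h + 2\beta} \sum_i |S_i|\Big).
\end{equation*}

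Dividing the first estimate by the second gives the main inequality, and for $h = h_n^*+1$ the bound $e^{-6\beta h + 2\beta} \le 1/n$ together with the isoperimetric inequality $|S_i| \le \tfrac{n}{4}|\partial S_i|$ absorbs the entropic term into $\tfrac{c_0}{4}\sum_i |\partial S_i|$, giving the ``in particular'' claim. The main obstacle is the clean product structure in the second step: one must verify that conditioning on \emph{several} up $h$-contours $\partial S_i$ simultaneously does not couple the interiors of the $S_i$ through the pinning term. This is where disjointness together with $h \ge 1$ matters---no $\Qtwo$-vertex can straddle some $\partial S_i$ or sit on its interior vertex boundary---so that the lifting-map and FKG estimates do genuinely factor over $i$.
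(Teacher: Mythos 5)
Your proposal is essentially the paper's own proof: both rely on domain Markov to decouple the interiors $S_1,\dots,S_m$ so that the shift-down Peierls estimate and the FKG lower bound on $\wP(\cE_{S_i}\mid\cdot)$ factor over $i$; the paper merely packages this as an induction over the contours, whereas you apply the simultaneous shift and take the product in one pass. One small correction: $\cE_{S_i}$ is an \emph{increasing} event (it is the intersection of complements of decreasing events $\{\varphi_x\leq 0,\varphi_y\leq 1\}$), not decreasing as you wrote; with the sign you stated, the monotonicity/FKG step that removes the ``interior boundary $\geq h$'' conditioning would point the wrong way, whereas with $\cE_{S_i}$ increasing it gives exactly the lower bound you (and the paper) use.
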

\begin{proof}
    It is clear by the application of domain Markov that \cref{eq:admissibility-given-up-contour} holds even if we further condition on the values of $\varphi$ outside of $S$. Since shifting $\varphi \mapsto \varphi - 1$ inside of $S$ preserves $\varphi$ outside of $S$, \cref{eq:admissibility-and-up-contour} also holds conditional on $\varphi$ outside of $S$. In particular, \cref{lem:upper-bnd} holds even conditional on having a family of up-contours in the exterior of $S$, whence \cref{cor:upper-bnd-multiple-loops} follows by induction.
\end{proof}

\subsection{Proof of the upper bound}
To conclude Proposition~\ref{prop:main-upper-bound}, by Lemma~\ref{lem:upper-bnd}, we can deduce there are no up $h_n^*+1$ contours of size greater than $\log n$. The following lemma controls the total area confined in smaller contours; this type of estimate is fairly standard once one has the exponential tails of Corollary~\ref{cor:upper-bnd-multiple-loops}.

\begin{lemma}\label{lem:small-area-in-high-contours}
There exists $C, c > 0$ such that for all $\beta$ large, with probability $1-e^{- cn}$, the number of $x\in \Lambda_n$ interior to some up-contour of height $h_n^* +1$ and interior area at most $n^{1.9}$ is $Ce^{-\beta} n^2$. 
\end{lemma}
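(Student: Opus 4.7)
The plan combines the exponential tail bounds of Corollary~\ref{cor:upper-bnd-multiple-loops} with an isoperimetric reduction and a Chernoff/polymer argument. First, by Lemma~\ref{lem:upper-bnd} and a Peierls enumeration (the number of contours in $\Lambda_n$ of length $\ell$ is at most $n^2 C_0^\ell$), the probability that any up-contour at $h_n^*+1$ has length exceeding $L_0 := c_1 n$ is at most $n^2 C_0^{L_0}\exp(-(\beta-c_0/4)L_0)\le e^{-cn}$, for a small absolute $c_1>0$ and $\beta$ large. Let $\mathcal{E}$ be this high-probability event. On $\mathcal{E}$, writing $\gamma_1,\ldots,\gamma_m$ for the outermost up-contours at $h_n^*+1$ with $|\Int(\gamma_i)|\le n^{1.9}$, their interiors are pairwise disjoint and their union has cardinality $X$. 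Since each $|\gamma_i|\le L_0$, the isoperimetric bound $|\Int(\gamma_i)|\le |\gamma_i|^2/16\le L_0|\gamma_i|/16$ gives the key implication
\[\{X\ge Ce^{-\beta}n^2\}\cap\mathcal{E}\ \subseteq\ \Bigl\{\textstyle\sum_i|\gamma_i|\ge \tfrac{16C}{c_1}\,n\,e^{-\beta}\Bigr\}.\]

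To bound the right-hand probability I would apply Corollary~\ref{cor:upper-bnd-multiple-loops} with the standard polymer/cluster-expansion inequality $\sum_{\{\gamma_i\}\text{ disjoint}}\prod_i w(\gamma_i)\le\exp(\sum_\gamma w(\gamma))$. Taking $w(\gamma)=\exp(-(\alpha-s)|\gamma|)$ with $\alpha=\beta-c_0/4$ and $s\in(0,\alpha)$ a small absolute constant tilts into a Chernoff-style exponential moment:
\[\mathbb{E}\bigl[\exp\bigl(s\textstyle\sum_i|\gamma_i|\bigr);\mathcal{E}\bigr]\ \le\ \exp\Bigl(\sum_\gamma e^{-(\alpha-s)|\gamma|}\Bigr)\ \le\ \exp\bigl(C'n^2 e^{-c\beta}\bigr),\]
with the final step obtained by Peierls enumeration (dominated by the $\ell=4$ term). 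Markov's exponential inequality then yields
\[\wP\bigl(X\ge Ce^{-\beta}n^2,\,\mathcal{E}\bigr)\ \le\ \exp\Bigl(-\tfrac{16sC}{c_1}\,ne^{-\beta}+C'n^2 e^{-c\beta}\Bigr),\]
which, combined with Step~1, is bounded by $e^{-cn}$ provided $C$ is taken large enough relative to $C'$ and $\beta$ is large enough that the linear-in-$n$ gain dominates the $n^2 e^{-c\beta}$ error.

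The hard part will be the delicate balance of parameters. A smaller cutoff $L_0$ strengthens the isoperimetric reduction (yielding a larger lower bound on $\sum_i|\gamma_i|$ per unit area) but weakens the a priori exclusion in Step~1; and the Chernoff tilt $s$ must be small enough that the polymer enumeration $\sum_\gamma e^{-(\alpha-s)|\gamma|}$ remains of order $n^2 e^{-c\beta}$, while the linear gain $sL^*\sim nCe^{-\beta}$ must dominate this error term. Taking $\beta$ large and $C$ sufficiently large is what closes the argument, as is typical in the polymer-expansion/Peierls estimates for SOS models.
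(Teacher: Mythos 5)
Your overall strategy—converting the area bound into a boundary-length bound via isoperimetry and an a priori length cutoff, then controlling $\sum_i|\gamma_i|$ through Corollary~\ref{cor:upper-bnd-multiple-loops} and a Chernoff tilt—is a natural first instinct, but the single-scale reduction fails quantitatively. After applying the worst-case isoperimetric bound $|\Int(\gamma_i)|\le L_0|\gamma_i|/16$ with $L_0=c_1 n$, the threshold you get for $\sum_i|\gamma_i|$ is only $\Theta(n e^{-\beta})$. This is far \emph{below} the typical value of $\sum_i|\gamma_i|$ under $\wP$: length-$4$ up-contours at level $h_n^*+1$ (single-cell spikes) occur at density of order $e^{-4\beta}$, so $\sum_i|\gamma_i|$ is typically of order $n^2 e^{-4\beta}$, which exceeds $n e^{-\beta}$ as soon as $n\gtrsim e^{3\beta}$. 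Your own computation makes this visible: the exponential moment bound $\exp(C'n^2 e^{-c\beta})$ (with $c=4$, from the $\ell=4$ term of the Peierls sum) dominates the tilt gain $\exp(-\tfrac{16sC}{c_1}ne^{-\beta})$ once $n\gtrsim e^{3\beta}$, so the final expression $\exp\bigl(-\tfrac{16sC}{c_1}ne^{-\beta}+C'n^2e^{-c\beta}\bigr)$ diverges rather than being $\le e^{-cn}$. Taking $\beta$ large merely shifts this crossover; it does not remove it, since the order of quantifiers is $\beta$ fixed (but large) and then $n\to\infty$. No choice of the absolute constants $s,C,c_1$ repairs this, and shrinking $L_0$ to reduce the isoperimetric loss destroys the $1-e^{-cn}$ probability of the cutoff event~$\mathcal{E}$.

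The underlying problem is that the area-to-length conversion is only efficient when the contour's boundary length is commensurate with its area, i.e.\ for small contours—but the worst-case constant $L_0/16\sim n$ is dictated by the largest contours that might occur. The paper handles exactly this by partitioning the outermost $h_n^*+1$ up-contours dyadically by $|\Int(\gamma)|$. At scale $k$, with $|\Int(\gamma)|\in[2^{k-1},2^k]$, the isoperimetric inequality gives the much sharper $|\gamma|\ge 4|\Int(\gamma)|2^{-k/2}$, so the per-scale area threshold $(\epsilon_{\beta,k}n)^2$ with $\epsilon_{\beta,k}=C_0/(e^{\beta/2}k)$ converts into a length threshold that is safely above the typical boundary length at every scale, and a union bound over $k$ (using the $1/k^2$-summable sequence of thresholds) then closes the argument. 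Some form of this multi-scale decomposition is essential here; a single global Chernoff tilt on $\sum_i|\gamma_i|$ cannot succeed because the event you are trying to exclude implies a boundary-length bound that is not a large deviation.
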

\begin{proof}
  Let $\Gamma$ denote the set of all outermost $h_n^* +1$ up-contours having interior area at most $n^2/(\log n)^8$. We follow, e.g., the proof of~\cite[Lemma 4.7]{GL-entropic-repulsion} considering the contributions from contours of dyadically increasing sizes. Partition the set of outermost $h_n^* +1$ up-contours into sets $\mathfrak U_1,\mathfrak U_2,\ldots $ given by
\begin{align*}
    \mathfrak U_k = \{\gamma \in \Gamma: 2^{k-1}\le |\Int(\gamma)| \le 2^k\}\,.
\end{align*}
We will show that for a suitable absolute constant $C_0>0$, for each $k=1,\ldots,\lceil \log_2 L_1 \rceil$, 
\begin{align}\label{eq:small-loops-on-scale-k}
    \wP\Big(\sum_{\gamma \in \mathfrak U_k} |\Int(\gamma)|\ge (\epsilon_{\beta,k}n)^2\Big) \le \exp\Big( - (\beta -C) \frac{(\epsilon_{\beta,k}n)^2}{2^{k/2}}\Big)\,, \qquad \text{for}\qquad \epsilon_{\beta,k}:= \frac{C_0}{e^{ \beta/2} k}\,.
\end{align}
A union bound over $k$ implies the claimed bound. 

Fix $k$. To show~\eqref{eq:small-loops-on-scale-k}, suppose $\varphi$ is such that $\sum_{\gamma \in \mathfrak{U}_k} |S_i|\ge (\epsilon_{\beta,k} n)^2$ and suppose the elements of $\mathfrak{U}_k$ are $\gamma_1,\ldots,\gamma_m$ with interiors $S_1,\ldots,S_m$ respectively. By definition of $\mathfrak {U}_k$, it must be the case that 
\begin{align}\label{eq:m-upper-bound}
    m \le \sum_{i=1}^m |S_i| 2^{1-k}\,.
\end{align}
By the isoperimetric inequality in $\mathbb Z^2$, $|\gamma_i|\ge 4\sqrt{|S_i|}$, and the definition of $\mathfrak{U}_k$,  
\begin{align}\label{eq:sum-loop-lengths-lower-bound}
    \sum_{i=1}^{m} |\gamma_i| \ge 4 \sum_{i=1}^{m} |S_i| 2^{-k/2} 
\end{align}

By applying \cref{cor:upper-bnd-multiple-loops} to the $S_i$, we obtain 
\begin{align}\label{eq:multiple-loops-bound}
    \wP\Big( \bigcap_{i} \cC_{S_i,h_n^* +1}^{\uparrow}\Big) 
    \le \exp\Big( - (\beta -c_0/4) \sum_{i}|\gamma_i|\Big)\,.
\end{align}

We union bound over collections $\mathfrak{U}_k$ as follows: letting $\chi$ count $\frac{1}{n^2}\sum_{i=1}^{m}|S_i|$, 
\begin{align*}
    \wP\Big(\sum_{\gamma \in \mathfrak U_k} |\Int(\gamma)| \ge (\epsilon_{\beta,k} n)^2\Big) &  \le \sum_{(\epsilon_{\beta,k}n)^2 \le \chi n^2 \le n^2} \sum_{m\le \chi n^2 2^{1-k}}\binom{n^2}{m} \sum_{L \ge \chi n^2 2^{-\frac{k}{2}+2}} C^L e^{ - (\beta -c_0/4) L} \\ 
    & \le \sum_{\epsilon_{\beta,k}n^2 \le \chi n^2 \le n^2} \sum_{m\le \chi n^2 2^{1-k}}\binom{n^2}{m} e^{ - 4(\beta - C) \chi n^2 2^{-\frac{k}{2}}}\,.
\end{align*}
The first line above used~\eqref{eq:m-upper-bound}--\eqref{eq:sum-loop-lengths-lower-bound} to upper bound $m$ and lower bound $L$, and once the root-points for the $m$ distinct contours have been picked, there are at most $C^L$ ways to generate $m$ associated contours with total length $L$, for some absolute constant $C$. 

Now using the bound $\sum_{j\le \rho N} \binom{N}{j}\le \exp(\mathsf{H}(\rho) N)$ where $\mathsf{H}(\rho)$ is the binary entropy function (this bound uses $\rho\leq1/2$ which holds because otherwise some of the contours could not be distinct), we get 
\begin{align*}
    \wP\Big(\sum_{\gamma \in \mathfrak U_k} |\Int(\gamma)| \ge (\epsilon_{\beta,k} n)^2\Big) \le \sum_{\epsilon_{\beta,k}n^2 \le \chi n^2 \le n^2} \exp\Big(\Big( \mathsf{H}(\chi 2^{1-k}) - (\beta-C) 2^{2-\frac{k}{2}}\chi\Big) n^2 \Big)\,.
\end{align*}
It thus suffices to show that for every $\chi >\epsilon_{\beta,k}^2 = \frac{C_0^2}{e^{\beta} k^2}$, we have 
\begin{align}\label{eq:need-to-show-sfH}
    \mathsf{H}(\chi 2^{1-k})\le 3(\beta -C) \chi 2^{-k/2}\,,
\end{align}
to get~\eqref{eq:small-loops-on-scale-k}, absorbing the pre-factor of $n^2$ from the sum into the $C$ in the exponent. To see this~\eqref{eq:need-to-show-sfH}, using the bound $\mathsf{H}(\rho)\le \rho\log\frac{1}{\rho}+\rho$, and noting that $\chi 2^{1-k} \le (\beta - C) \chi 2^{-k/2}$ for all $k\ge 1$ and all $\beta$ large, we just need to show
\begin{align*}
     2^{1-k} \log \frac{2^{k-1}}{\chi}  \le (\beta - C)2^{1-\frac{k}{2}} \qquad \text{or}\qquad (k-1) (\log 2) + \log \chi^{-1} \le (\beta -C) 2^{k/2}\,.
\end{align*}
By the lower bound on $\chi$, we have $\log \chi^{-1}\le \beta + \log (k^2) - \log C_0^2$. At this point, $C_0$ can be taken large (independent of $\beta$ because the $\beta$ on the left is bounded by $2^{k/2} \beta$ on the right for all $k$) to only consider large values of $k$, and for those it is evident that the right-hand side is larger than the left-hand side. 
\end{proof}

\begin{proof}[\textbf{\emph{Proof of~\cref{prop:main-upper-bound}}}]
 Any vertex $x$ having $\varphi_x \ge h_n^* +1$ must be contained in some up $h_n^* +1$ contour, so it suffices to bound the total area interior to outermost up $h_n^*+1$ contours. By application of Lemma~\ref{lem:small-area-in-high-contours}, it suffices to bound the contribution from up $h_n^*+1$ contours with interior area at least $n^{1.9}$, which necessitates contour length at least $n^{0.95}$. 
 By the fact that there are at most $C^\ell$ many contours of length $\ell$ incident about a vertex $x$ for a universal constant $C>0$, Lemma~\ref{lem:upper-bnd} and a union bound imply that 
 \begin{align*}
     \wP\Big( \bigcup_{\ell \ge n^{0.75}} \bigcup_{\partial S: x\sim \partial S, |\partial S| =\ell} \mathcal C_{S,h_n^*+1}^\uparrow \Big) \le \sum_{\ell \ge n^{0.75}} \sum_{\partial S: x\sim \partial S, |\partial S| =\ell} C^\ell e^{ - (\beta - \frac{c_0}{4})\ell } \le e^{ - (\beta - C') n^{0.75}}\,,
 \end{align*}
which for $\beta$ large, is at most $e^{ - n^{0.75}}$. By a union bound over the $n^2$ possible choices of $x$ in $\Lambda_n$, this rules out the existence of any $h_n^*+1$ up-contours of interior area greater than $n^{3/2}$ concluding the proof. 
\end{proof}

\begin{remark}\label{rem:upper-bound-k=1}
When the fractional part $\xi_n := (\frac{1}{6\beta}\log n + \frac{1}{3}) - \lfloor \frac{1}{6\beta}\log n + \frac{1}{3}\rfloor $ is below a certain threshold (depending on $\beta$), the proof of \cref{prop:main-upper-bound} applies also for $h_n^*$. To see this, notice that in uses of \cref{lem:upper-bnd}, taking $h= h_n^*$, we in fact obtain
\begin{align*}
    \wP\big(\cC_{S,h_n^*}^\uparrow\big) \leq \exp\left(-\beta|\partial S| + c_0 (|\partial S|/4) e^{6\beta\xi_n}\right) \,. 
\end{align*}
Whenever $\xi_n \leq \frac{\log \beta}{8\beta}$, for instance, this is at most $e^{-(\beta/2)|\partial S|}$ and the rest of the steps go through as before.
\end{remark}

\section{Lower bound}\label{sec:LB}
Our goal in this section is to show the following  lower bound on the typical height of $\varphi$. We will then conclude the section by combining it with~\cref{prop:main-upper-bound} and moving back to $\mathbb P$ to deduce Theorem~\ref{thm:main}.  
\begin{prop}
    \label{prop:main-lower-bound}
There exists a constant $C>0$ such that for all $\beta$ large, 
\[ \wP\left( \#\{x: \varphi_x \leq h_n^*-2\} \geq (C/\beta)n^2\right) \leq e^{-\beta n } \,. 
\]
\end{prop}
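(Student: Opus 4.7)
The strategy is to mirror \cref{sec:UB}, now applied to down-contours at height $h_n^*-2$ rather than up-contours at $h_n^*+1$. The fortunate structural observation is that \cref{lem:lifting-map} already shifts heights \emph{up}, which is exactly what one wants for a down-contour Peierls map: on a down $(h_n^*-2)$-contour $\partial S$ the interior sits strictly below the exterior (whose heights are $\geq h_n^*-1\geq 1$), so shifting up the interior by $1$ automatically stays in $\widetilde\Omega$, and no separate tooth-spike analysis (as in \cref{lem:tooth}) is needed. Every edge across $\partial S$ has $\varphi_u<\varphi_v$, so $\Delta S'=-|\partial S|$ in the lemma, and after summing $\sum_A \wP(U_A\varphi)\geq \wP(\varphi)\,e^{\beta|\partial S|}$ over $\varphi\in \cC^\downarrow_{S,h_n^*-2}$ and appealing to the injectivity of \cref{lem:uniqueness-of-lifting-map}, one obtains the clean exponential tail
\[
\wP(\cC^\downarrow_{S,h_n^*-2})\leq e^{-\beta|\partial S|}.
\]
The same domain-Markov induction as in \cref{cor:upper-bnd-multiple-loops} then yields a multi-loop version $\wP(\bigcap_i \cC^\downarrow_{S_i,h_n^*-2})\leq e^{-\beta\sum_i |\partial S_i|}$.

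To translate tails on individual contours into a bound on the total area $|\{x:\varphi_x\leq h_n^*-2\}|$, I would follow the approach of \cref{lem:small-area-in-high-contours}. Every low vertex lies in the interior of some outermost down $(h_n^*-2)$-contour, so it suffices to bound $\sum_\gamma |\Int \gamma|$ over outermost contours $\gamma$. I would partition these by a dyadic scale $\mathfrak D_k=\{\gamma:2^{k-1}\leq|\Int \gamma|\leq 2^k\}$ of interior area, and for each $k$ combine the per-contour tail with the combinatorial enumeration of contours by perimeter together with the isoperimetric bound $|\partial \gamma|\geq 4\sqrt{|\Int \gamma|}$. A calibration such as $\delta_{\beta,k}\propto 1/(\sqrt\beta\,k)$ (so that $\sum_k \delta_{\beta,k}^2=O(1/\beta)$) arranges that the contribution from each scale is below $\delta_{\beta,k}^2 n^2$ except with small probability, making the total at most $(C/\beta)n^2$.

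For the failure probability, I would separate contours of perimeter $\geq cn$ from contours of perimeter $<cn$. The first case is handled by a direct union bound: with at most $C^\ell$ contours of perimeter $\ell$ through a given vertex, summing $\wP(\cC^\downarrow_{S,h_n^*-2})\leq e^{-\beta\ell}$ over $\ell\geq cn$ and over $n^2$ starting vertices yields probability $\leq e^{-\beta n}$ for $c$ chosen large enough (any $c>1$ suffices once $\beta$ is large). The small-contour case is governed by the dyadic sum, which at each scale $k$ delivers a stretched-exponential tail much stronger than $e^{-\beta n}$. Combining with \cref{prop:main-upper-bound} and translating back to $\P$ via \cref{obs:relaxed-model} (using that $\phi_x=\max\{0,\varphi_x\}$ preserves membership in $\{\cdot\leq h\}$ and $\{\cdot\geq h\}$ for $h\geq 1$) then gives \cref{thm:main}.

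The main obstacle I anticipate is the quantitative calibration between the $(C/\beta)n^2$ area bound and the $e^{-\beta n}$ probability bound. The cleanest isoperimetric estimate applied to a single ``big droplet'' of area $(C/\beta)n^2$ only produces $e^{-O(\sqrt{\beta})\,n}$ (from $|\partial S|\geq 4\sqrt{|S|}$), which is weaker than $e^{-\beta n}$ when $\beta$ is large. The remedy is precisely the large-contour/small-contour split above: once $|\partial S|\geq cn$ the direct union bound is already $\beta n$-exponential, while the small-contour enumeration benefits from tight combinatorial factors $\binom{n^2}{N_k}$ that beat the isoperimetric loss once $N_k$ is of the right size. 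Getting the constants to line up under a single universal $C$ is the main bookkeeping step.
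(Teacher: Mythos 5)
Your proposal takes a genuinely different route from the paper---a contour Peierls argument for down-contours at height $h_n^*-2$ rather than the paper's global histogram/shift argument (\cref{lem:lower-bound-histogram})---and unfortunately the route has a structural gap that the paper's argument is specifically designed to avoid.

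The gap is the claim ``every low vertex lies in the interior of some outermost down $(h_n^*-2)$-contour.'' This is false, and the reason is the boundary condition: $\partial\Lambda_n$ is at height $0 \leq h_n^*-2$. The set $\{x:\varphi_x\leq h_n^*-2\}$ therefore contains a ``base region'' that connects to $\partial\Lambda_n$ through low vertices, and this base region is the exterior (not the interior) of the up $(h_n^*-1)$-contours bounding the pancake; it is not enclosed by any down-contour at all. Your Peierls map (shift a down-contour interior up by $1$) says nothing about this region, since there is no down-contour to shift. Indeed, shifting the base region up would increase the gradient by $1$ on each of the $4n$ edges across $\partial\Lambda_n$, a $4\beta n$ loss that is not matched by any boundary gain when the up-contour is small. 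Showing that this base region is small (equivalently, that the interface rises quickly from $0$ to $h_n^*$ near $\partial\Lambda_n$) is precisely the substantive content of the lower bound, and it requires a global mechanism. The paper's \cref{lem:lower-bound-histogram} supplies this by shifting the \emph{whole} configuration up by $1$ and paying the $4\beta n$ boundary cost plus the $\lambda|\Qtwo(\varphi)|$ token loss, recouping them through the entropy of tooth-like spikes at pairs in $X_k(\varphi)$: the factor $(1+e^{-6\beta(h_n^*-k+1)+2\beta})^{|X_k'|}$ grows like $\exp(e^{\beta k} n)$, which dominates. Your contour map has no such entropy source because it drops the $(1+\tfrac12 e^{-6\beta})^{|\Qtwo\cap S'|/5}$ factor from \cref{lem:lifting-map} rather than exploiting it, and even keeping it would not help, since the base-region shift is not a contour shift.

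As a secondary point, your worry about the $e^{-\beta n}$ calibration is well-founded and is another symptom of the same issue: a single droplet of area $(C/\beta)n^2$ has perimeter as small as $O(\sqrt{\beta}^{-1}n)$, so a pure $e^{-\beta|\partial S|}$ tail only gives $e^{-O(\sqrt{\beta})n}$, and the large/small split you sketch does not close this gap (small contours of perimeter $<cn$ can still have total area $\Theta((C/\beta)n^2)$). The paper sidesteps this entirely: \cref{lem:lower-bound-histogram} gives a failure probability of $\exp(-e^{\beta k}n)$ for each $k\geq 2$, which is much stronger than $e^{-\beta n}$ and does not rely on isoperimetry at all. Your per-contour bound $\wP(\cC^\downarrow_{S,h_n^*-2})\leq e^{-\beta|\partial S|}$ is in itself a correct consequence of \cref{lem:lifting-map} and \cref{lem:uniqueness-of-lifting-map} (with $\Delta S'=-|\partial S|$ and $W=\emptyset$, dropping the entropy factor), and it would be useful for controlling the nested down-contours inside the pancake; but that is not the bottleneck here.
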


The proof goes by examining the histogram of $\varphi$ and finding a $k$ such that  the histogram has more faces than it should at height $h_n^* -k$. The map then lifts the interface up by $k$ while injecting entropy through tooth-like spikes of depth $k$ that are now permitted. We work on the event of $|\Qtwo(\varphi)|\le e^{7\beta} n$
which was proved to have high probability by \cite[Thm. 1.2]{FeldheimYang23} (our Corollary~\ref{cor:FY-q2+-bound} for completeness), ensuring that the loss of tokens from lifting $\Qtwo$ up doesn't overwhelm the entropy gained.

\begin{lemma}\label{lem:lower-bound-histogram}
For every $k\ge 2$ define the set
\[ X_k(\varphi) := \{x\sim y:\; \varphi_x = \varphi_y = h_n^*-k\,,\, \mbox{and no $z$ adjacent to $x$ or $y$ has $\varphi_z\leq 0$}\}\,,\]
(where we counted unordered pairs of adjacent sites $x\sim y$). Then for every $k\ge 2$,  
\[ \wP\left( |X_k(\varphi)| \geq  e^{8\beta -5\beta k } n^2 \,,\, |\Qtwo(\varphi)|\leq e^{7\beta}n\right) \leq \exp(-e^{\beta k} n )\,. 
\]    
\end{lemma}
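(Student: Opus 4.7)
The proof is a Peierls-type argument in the spirit of \cref{lem:lifting-map}, with the roles of inflation and deflation reversed. Fix $k \geq 2$ and consider the event $E := \{|X_k(\varphi)| \geq e^{8\beta - 5\beta k} n^2,\, |\Qtwo(\varphi)| \leq e^{7\beta} n\}$. For each $\varphi \in E$, the plan is to exhibit a family $\{T_A \varphi\}_{A \subseteq X_k(\varphi)}$ of $2^{|X_k(\varphi)|}$ distinct configurations whose total weight is comparable to $\wP(\varphi)$, thereby forcing $\wP(\varphi)$ to be very small. The map $T_A$ has two steps: first a global lift $T_0\varphi := \varphi + k$ (which sends each $X_k$-pair from $h_n^*-k$ up to the bulk height $h_n^*$); then, for each $(x,y) \in A$, the insertion of a depth-$k$ tooth-like spike, e.g.\ by setting $T_A\varphi_x = h_n^*-k$ and $T_A\varphi_y = h_n^*-k+1$. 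These are exactly the tooth-like spikes of depth $k$ \emph{now permitted} by the lift: before, the surrounding heights of the $X_k$-pair were only guaranteed $\geq 1$ and a depth-$k$ tooth would not have fit beneath them; after the lift the surroundings sit at $\geq 1+k$ and it does. Admissibility in $\widetilde\Omega$ is clear, since $T_0\varphi \geq k - 1 \geq 1$ and the tooth heights are nonnegative in the relevant range $k \leq h_n^*-1$ (for $k \geq h_n^*$ the set $X_k$ is already empty, because the definition of $X_k$ would force the adjacent site $y$ of $x$ to have $\varphi_y\leq 0$).

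To compute the weights: the uniform lift $T_0\varphi$ increases the disagreements along $\partial\Lambda_n$ by $k$ on each of the $\sim 4n$ edges (giving a factor $e^{-4\beta k n(1+o(1))}$) and eliminates every $\Qtwo$ site of $\varphi$ since $T_0\varphi > 0$ everywhere (factor $e^{-\lambda|\Qtwo(\varphi)|}$); internal disagreements are preserved. Each tooth insertion perturbs only the seven edges incident to its pair and adds no new $\Qtwo$ sites (the tooth heights are positive with neighbors $\geq 1+k$). A direct calculation in the spirit of \cref{lem:tooth} shows that the resulting increase in disagreement at each tooth is at most $3k + 3(k-1) + 1 = 6k-2$, \emph{uniformly in the surrounding heights} (essentially by the triangle inequality applied edge-by-edge). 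Combining these observations,
\[
\sum_{A \subseteq X_k(\varphi)} \wP(T_A \varphi) \;\geq\; \wP(\varphi) \cdot e^{-4\beta k n - \lambda|\Qtwo(\varphi)|} \cdot \bigl(1 + e^{-6\beta k + 2\beta}\bigr)^{|X_k(\varphi)|}.
\]

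Assuming injectivity of the map $(\varphi,A)\mapsto T_A\varphi$, the trivial bound $\sum_{\psi}\wP(\psi)\le 1$ then yields, on restricting to $\varphi \in E$ and using $\lambda \leq 2e^{-4\beta}$ together with the event's bounds on $|\Qtwo|$ and $|X_k|$,
\[
\wP(E) \;\leq\; \exp\!\Bigl(O(\beta k + e^{3\beta})\,n \;-\; \tfrac{1}{2}\, e^{10\beta - 11\beta k}\, n^2\Bigr),
\]
which is far smaller than $\exp(-e^{\beta k} n)$ for $\beta$ large and $n$ correspondingly large. The main obstacle is establishing injectivity: in principle a pair in $T_A\varphi$ at heights $(h_n^*-k,h_n^*-k+1)$ could be confused with a pair of $T_0\varphi$ at the same heights originating from a depressed pair $(h_n^*-2k,h_n^*-2k+1)$ of $\varphi$, which is $\widetilde\Omega$-admissible precisely when $k \leq h_n^*/2$. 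I would resolve this either by (i) choosing a more distinctive tooth signature---for instance placing one of the tooth heights at a value forbidden in $T_0\varphi$ by the $\widetilde\Omega$-admissibility of $\varphi$---or by (ii) first restricting to a maximal independent subset of $X_k$-pairs of size at least a constant fraction of $|X_k|$, thereby preventing interactions between nearby tooth insertions and ensuring each image is attained by at most boundedly many $(\varphi,A)$, the $O(\log)$ loss being easily absorbed into the exponent.
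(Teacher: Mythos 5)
Your high-level plan (a lifting map followed by the insertion of tooth-like spikes, then a Peierls sum) is the right one, and your computation that a depth-$k$ tooth costs at most $3k+3(k-1)+1=6k-2$ in gradient is correct, as is the recognition that you must restrict to a \emph{disjoint} subfamily of $X_k$-pairs. But the specific map you chose --- lift by $k$ and place teeth at $(h_n^*-k,\, h_n^*-k+1)$ --- does not close the argument. The lift costs $4\beta kn$ in boundary gradient rather than $4\beta n$, and this term scales linearly in $k$; meanwhile the per-tooth entropy factor $e^{-6\beta k+2\beta}$, combined with the assumed lower bound $|X_k|\geq e^{8\beta-5\beta k}n^2$, yields a total entropy gain of order $n^2 e^{10\beta-11\beta k}$. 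For $k$ near $h_n^*-1$ (the largest $k$ for which $X_k\neq\emptyset$), one has $e^{6\beta k}=\Theta(n)$, so $e^{-11\beta k}=\Theta(n^{-11/6})$ and the entropy gain is only $\Theta(e^{c\beta}n^{1/6})$ --- dwarfed by the lift cost $4\beta k n = \Theta(n\log n)$. Your final display $\wP(E)\leq\exp(O(\beta k+e^{3\beta})n-\tfrac12 e^{10\beta-11\beta k}n^2)$ therefore has a \emph{positive}, diverging exponent for those $k$: it is not ``far smaller than $\exp(-e^{\beta k}n)$,'' it is not even $\leq 1$, no matter how large $n$ is. Since \cref{prop:main-lower-bound} needs the lemma uniformly over $2\leq k\leq h_n^*-1$, this is a genuine gap, not a case that can be dismissed by taking $n$ ``correspondingly large.''

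The paper avoids this by lifting by $1$ (cost a flat $4\beta n$, independent of $k$) and then pushing the selected pairs \emph{all the way down} to $(0,1)$. The per-tooth cost is then $\leq 6\beta(h_n^*-k+1)-2\beta$, which via $6\beta h_n^*\leq\log n+2\beta$ yields an entropy factor $\geq\frac1n e^{6\beta(k-1)}$; combined with $|X_k|\geq e^{8\beta-5\beta k}n^2$ the total entropy gain becomes $\Theta(ne^{\beta k+2\beta})$, which \emph{increases} in $k$ and always beats the fixed cost $4\beta n+\lambda|\Qtwo|$. This choice also automatically delivers the clean injectivity signature you were groping for in your fix (i): in a configuration lifted by $1$ from $\widetilde\Omega$, no adjacent pair can sit at heights $(0,1)$ \emph{except} an inserted tooth (because $T_0\varphi_z=0$ forces $\varphi_z=-1$, whose neighbors in $\widetilde\Omega$ are all $\geq 1$, hence $\geq 2$ after lifting); and the new zero at $x$ is isolated, so no $\Qtwo$ tokens appear. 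Your fix (ii) --- passing to a maximal disjoint subfamily of $X_k$-pairs, of size $\geq|X_k|/7$ --- is exactly what the paper does and is indeed needed; it is the choice of how far to lift and where to place the teeth that must be revised.
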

\begin{proof} Fix $k\geq 2$, and let $\mathcal B_k$ be the bad event that $|X_k(\varphi)|\geq e^{8\beta -5\beta k } n^2 $, and $|\Qtwo(\varphi)|\le e^{7\beta} n$. (Note that the choice of $-5\beta k$ is somewhat arbitrary in that $5$ could be any number strictly smaller than $6$, since $e^{-6\beta k} n^2$ is, to first order, the expected number of sites at depth $k$ below $h_n^*$.) 

Let $X'_k(\varphi)$ be an arbitrary subset of disjoint ordered pairs of adjacent sites $(x,y)$ among $X_k(\varphi)$, noting that we may collect at least $|X_k(\varphi)|/7$ such pairs greedily (e.g., by listing the edges $xy$ of $X_k(\varphi)$, ordered $x<y$ lexicographically, proceeding sequentially and collecting each one that is not sharing a vertex with a previously selected edge).

For a prescribed subset of pairs $S\subset X'_k(\varphi)$, $S= \{(x_i,y_i)\}_i$, define the map $\varphi \mapsto T_S \varphi$  via 
\begin{align*}
    T_S \varphi_v = \begin{cases} 0 & \text{$v = x_i$ for some $i$}
    \\ 1 & \text{$v = y_i$ for some $i$} \\ 
    \varphi_v +1 & \text{otherwise}\end{cases}\,,
\end{align*}
i.e., it lifts $\varphi$ by $1$, then adjusts $\varphi_x$ to $0$ and $\varphi_y$ to $1$ for every $(x,y)\in S$ (noting these are all disjoint by construction). 
The fact that $T_S\varphi \in \widetilde \Omega$ follows from the fact that $S$ is not adjacent to any $v$ having $\varphi_v\le 0$, and otherwise the constraint of $\widetilde \Omega$ is increasing, so lifting the rest of the configuration cannot take it outside~$\widetilde \Omega$. 

We can easily compare probabilities 
\begin{align}\label{eq:ratio-of-probabilities}
    \frac{\widetilde{\mathbb P}(T_S \varphi)}{\widetilde{\mathbb P}(\varphi)} \ge \exp\Big( - 4\beta n - (6\beta (h_n^* - k +1) + 2\beta) - \lambda|\Qtwo(\varphi)|\Big)\,.
\end{align}
We now claim that for fixed $k$, the sets of images $T_S\varphi$ across $(\varphi, S)$ are all disjoint, so that we may sum the above expression. 
Note that if $(T_S\varphi_z, T_s \varphi_{z'}) =(0,1)$ for a pair $z\sim z'$, it could not be that $\varphi_z =-1, \varphi_{z'} =0$ because that would violate $\varphi \in \widetilde \Omega$. Therefore, any such pair $(z,z')$ must have both $z,z'$ belonging to edges in $S$. Moreover, all vertices belonging to edges in $S$ get either height $0$ or $1$ and have a neighbor in the other height. In this manner, the set  $S$ can be read off from $T_S\varphi$, and hence the interface $\varphi$ can be read off from $T_S\varphi$ (recover $S$, then set everyone in $S$ back to height $h_n^*-k$ and shift every other site's height down by $1$). Summing over $\varphi\in \mathcal B_k$ and $S\subset X'_k(\varphi)$ and applying~\eqref{eq:ratio-of-probabilities},  
\begin{align*}
1 \ge \sum_{\varphi \in \mathcal B_k} \sum_{S\subset X'_k(\varphi)} e^{ - 6\beta (h_n^* -k+1) + 2\beta} \exp(-4\beta n - \lambda|\Qtwo(\varphi)|) \widetilde {\mathbb P}(\varphi)\,.
\end{align*}
In turn, by binomial theorem, the definition of $h_n^*$~\eqref{eq:h-n-*}, and the  bounds of $|X'_k(\varphi)|\ge \frac{1}{7}|X_k(\varphi)| \ge \frac17 e^{8\beta -5\beta k} n^2 $ and $|\Qtwo(\varphi)| \le e^{7\beta} n$ on $\mathcal B_k$, 
\begin{align*}
1 &\geq \min_{\varphi\in \mathcal B_k} (1+e^{-6\beta (h_n^*-k+1)+2\beta})^{|X'_k(\varphi)|} 
\exp(-4\beta n - \lambda |\Qtwo(\varphi)|)\wP(\mathcal B_k) \\
&\geq  \min_{\varphi \in \mathcal B_k} \exp\Big( \frac1n e^{6\beta (k-1)}|X'_k(\varphi)| 
-4\beta  n - \lambda |\Qtwo(\varphi)|
\Big) \wP(\mathcal B_k) \\ 
&\geq
\exp\Big( \frac{1}{7} e^{\beta( k +2)}n -4\beta  n - \lambda e^{7\beta} n\Big) \wP(\mathcal B_k)\,.
\end{align*}
Thus, using that $\lambda \leq e^{-4\beta}/(1+e^{-4\beta})$, for every $k\geq 2$ we have (for $\beta$ large enough) that
\[ (4\beta +\lambda e^{7\beta}) n   \leq 2e^{3\beta} n \leq (\tfrac17 e^{2\beta} -1) e^{\beta k}n\,,\] and it follows that $\wP(\mathcal B_k) \le \exp( - e^{\beta k} n)$ as required.
\end{proof}

We need to show we aren't losing much of the histogram of $\varphi$ by considering $X_k(\varphi)$ rather than $\varphi^{-1}(h_n^* -k)$. The discrepancy between these two histograms is controlled by the total gradient of a configuration.

\begin{lemma}\label{lem:bound-wall-faces}
  There exists an absolute constant $C>0$ such that, if $\beta$ is large enough then
  \[ \wP\Big(\sum_{x\sim y}|\varphi_x - \varphi_y| \geq (C/\beta) n^2 \Big) \leq \exp(- n^2 )\,.\]
\end{lemma}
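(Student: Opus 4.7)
The plan is to run an exponential Markov (Chernoff) estimate at half the inverse temperature. Writing $G(\varphi):=\sum_{x\sim y}|\varphi_x-\varphi_y|$, Markov gives
\[ \wP(G\ge A) \le e^{-\beta A/2}\,\E_{\wP}\bigl[e^{\beta G/2}\bigr] = e^{-\beta A/2}\cdot \frac{\sum_{\varphi\in\widetilde\Omega}e^{-(\beta/2)G+\lambda|\Qtwo|}}{\sum_{\varphi\in\widetilde\Omega}e^{-\beta G+\lambda|\Qtwo|}}\,. \]
The key observation is that $|\Qtwo(\varphi)|\le n^2$ for every $\varphi$, so the numerator is at most $e^{\lambda n^2}\sum_{\varphi}e^{-(\beta/2)G(\varphi)}$, where I extend the sum to all $\varphi:\Lambda_n\to\Z$ with $\varphi|_{\partial\Lambda_n}\equiv 0$ (dropping the $\widetilde\Omega$ restriction only increases the sum). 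Meanwhile, the denominator is at least $e^{\lambda n^2}$, attained by $\varphi\equiv 0$, which lies in $\widetilde\Omega$ and has $G=0$ and $|\Qtwo|=n^2$. The two $e^{\lambda n^2}$ factors cancel, so the pinning term drops out completely and the task reduces to bounding the free partition function $\Xi(\beta/2):=\sum_{\varphi}e^{-(\beta/2)G(\varphi)}$.

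For $\Xi(\beta/2)$ I would use the spanning tree trick. Collapse $\partial\Lambda_n$ to a single root vertex (since all its values are pinned to $0$) and fix any spanning tree $T$ of the resulting graph, so $|T|=n^2+O(n)$. Parametrizing each $\varphi$ by the tuple of integer tree-differences $(\delta_e)_{e\in T}$ along tree edges is a bijection onto $\Z^{T}$, and dropping the non-tree edge contributions (each bounded by $1$) gives
\[ \Xi(\beta/2)\le \prod_{e\in T}\sum_{k\in\Z}e^{-(\beta/2)|k|}=\left(\frac{1+e^{-\beta/2}}{1-e^{-\beta/2}}\right)^{|T|}\le \exp\bigl(C_1\, e^{-\beta/2}\,n^2\bigr) \]
for some absolute $C_1$ and all $\beta$ large, since $\log\bigl(\tfrac{1+x}{1-x}\bigr)\le \tfrac{2x}{1-x}$ with $x=e^{-\beta/2}$.

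Combining, with $A=(C/\beta)n^2$ we obtain
\[ \wP\bigl(G\ge (C/\beta)n^2\bigr)\le \exp\bigl((-C/2+C_1e^{-\beta/2})n^2\bigr)\,, \]
and fixing $C$ to be a suitable absolute constant (e.g., $C=4$) makes the exponent at most $-n^2$ for all $\beta$ sufficiently large, giving the claim. I do not anticipate any real obstacle: the spanning tree decoupling is routine, the main subtlety is just the book-keeping of the $e^{\lambda n^2}$ cancellation, which is what makes the pinning potential harmless for this particular estimate and justifies enlarging the state space from $\widetilde\Omega$ to all integer-valued $\varphi$ with $0$ boundary.
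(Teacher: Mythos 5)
Your proposal is correct, and the argument is genuinely different in its combinatorial core from the one in the paper, even though both hinge on the same preliminary observation. Both you and the paper begin by noting that the pinning term is harmless for this estimate: one lower-bounds the partition function by the weight of $\varphi\equiv 0$, namely $Z\ge e^{\lambda n^2}$, and since $|\Qtwo(\varphi)|\le n^2$ for every $\varphi$, the $\lambda$-dependence cancels. From that point the two proofs diverge. The paper bounds $\sum_{\varphi:\,G(\varphi)=k}1\le e^{C_*(n^2+k)}$ by viewing the SOS surface as a connected collection of $n^2+k$ horizontal and vertical faces in the dual of $\Z^3$ rooted at a boundary face, and then sums the geometric series $\sum_{k\ge A}e^{C_*(n^2+k)-\beta k}$; this is a direct Peierls-style face-count. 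You instead run a Chernoff bound at half the inverse temperature, reducing to the free partition function $\Xi(\beta/2)$, and control that with the spanning-tree decoupling $\Xi(\beta/2)\le\big(\tfrac{1+e^{-\beta/2}}{1-e^{-\beta/2}}\big)^{|T|}$. Both arguments are standard and give the same kind of quantitative conclusion; yours has the small aesthetic advantage of avoiding the absolute constant $C_*$ coming from lattice-animal enumeration (the spanning-tree bound is completely explicit), while the paper's argument is slightly more direct in that it does not require a tilting step. Either version is a fine proof of the lemma.
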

\begin{proof}
We can view the surface corresponding to a configuration $\varphi$ as a set of $\sum_{x \sim y}|\varphi_x - \varphi_y|$ vertical faces and $n^2$ horizontal faces in the dual graph to $\Z^3$. Let $C_*>0$ be an absolute constant such that the number of connected components of $k$ faces intersecting a fixed initial point is at most $e^{C_* k}$. Thus, taking this initial point to be somewhere along the boundary of $\Lambda_n$, the number of configurations $\varphi$ such that $\sum_{x \sim y}|\varphi_x - \varphi_y| = k$ is at most $e^{C_*(n^2+k)}$. Considering the trivial map which sends $\varphi \mapsto 0$ everywhere, and summing over $\varphi$ with $k \geq (2C_*/\beta)n^2$, we obtain when $\beta > 3C^*$ that
 \[ \wP(\sum_{x\sim y}|\varphi_x - \varphi_y| \geq (2C_*/\beta) n^2 ) 
 \leq \sum_{k\geq (2C_*/\beta) n^2} e^{C_*(n^2+k)-\beta k} = \frac{e^{C_* n^2- (\beta-C_*)(2C_*/\beta)n^2}}{1-e^{-(\beta-C_*)}} \leq 2\exp(-\tfrac13C_*n^2)\,. \qedhere\]
\end{proof}

\begin{proof}[\textbf{\emph{Proof of~\cref{prop:main-lower-bound}}}]
    We can sum \cref{lem:lower-bound-histogram} for $k \geq 2$ and combine with \cref{cor:FY-q2+-bound} to see that the number of vertices in some pair of $\bigcup_{k\ge 2} X_k(\varphi)$ is at most $e^{-\beta}n^2$ with probability $1-e^{-e^\beta n}$. 
    
    The vertices not in $\bigcup_{k\geq 2} X_k(\varphi)$ are a subset of
    $ \{x : \dist(x,A)\leq 2\}$ where $A$ is the union of $\Qtwo(\varphi)$ along with every $x$ adjacent to a nonzero gradient of $\varphi$. By \cref{cor:FY-q2+-bound} and \cref{lem:bound-wall-faces}, we have $|A|\leq (C/\beta)n^2$ with probability $1-O(\exp(-n^2))$. Noting $|\{x:\dist(x,A)\leq 2\}|\leq 13|A|$ concludes the proof.
\end{proof}

\begin{remark}\label{rem:lower-bound-k=1}
When the fractional part $\xi_n := (\frac{1}{6\beta}\log n + \frac{1}{3}) - \lfloor \frac{1}{6\beta}\log n + \frac{1}{3}\rfloor $ is above a certain (absolute) threshold, the proof of \cref{prop:main-lower-bound} also goes through for $h_n^* -1$. To see this, notice that in the proof of \cref{lem:lower-bound-histogram} we in fact obtained an upper bound on $\wP(\cB_k)$ of the form
\[ \wP(\cB_k) \leq \max_{\varphi\in\cB_k} 
\exp\left(4\beta n + \lambda |\Qtwo(\varphi)| - \frac1n e^{6\beta(k-1)+6\beta \xi_n}|X'_k(\varphi)|\right)\,.
\]
A more careful application of \cref{cor:FY-q2+-bound} (using $C_\beta =90\beta e^{6\beta}$ rather than $e^{7\beta}$ as we used in the proof of \cref{lem:lower-bound-histogram}) allows us to only consider contribution from $\varphi$ with $4\beta n + \lambda |\Qtwo(\varphi)| \leq  100\beta e^{2\beta} n$. Setting $k=1$, whenever we have $\xi_n \geq \frac13+\delta_0$ for some absolute constant $\delta_0>0$, the right hand of our upper bound on $\wP(\cB_1)$  is at most
\[ \max_{\varphi\in\cB_1} \exp\Big(-\Big( e^{(2+6\delta_0)\beta}\frac{|X'_k(\varphi)|}{n^2}-
100\beta e^{2\beta} \Big)n\Big)\,.
\]
Thus, for $\beta$ large enough depending on $\delta_0$, the event that $|X'_k(\varphi)| \geq e^{-\delta_0\beta} n^2$  would have exponentially small probability in $n$ as needed, implying the same up to a factor of $7$ for $|X_k(\varphi)|$.
\end{remark}

\begin{proof}[\textbf{\emph{Proof of Theorem~\ref{thm:main}}}]
    Under the identification $\phi_v = \max\{0,\varphi_v\}$ for all $v$, the sets of sites not at height $\{h_n^* -1,h_n^*\}$ are fixed. Thus, by Observation~\ref{obs:relaxed-model},
    $$\mathbb P(|\{x:\phi_x \notin \{h_n^* -1,h_n^*\}\}| >\tfrac{C}{\beta} n^2) = \wP(|\{x:\varphi_x \notin \{h_n^* -1,h_n^*\}\}| >\tfrac{C}{\beta} n^2)\,,$$ 
    and the result follows by combining~\cref{prop:main-upper-bound} with~\cref{prop:main-lower-bound}. 
\end{proof}

\bibliographystyle{abbrv}
\bibliography{sos_pinning}

\end{document}